%% file: liouville_rewriting.tex
\documentclass[11pt]{amsart}
\usepackage[T1]{fontenc}
\usepackage[utf8]{inputenc}
\usepackage{lmodern}
\usepackage[margin=1in]{geometry}
\usepackage{amsmath,amssymb,amsthm,mathtools}
\usepackage{microtype}
\usepackage{tikz}
\usetikzlibrary{arrows.meta}
\definecolor{ink}{RGB}{35,43,55}
\usetikzlibrary{angles,quotes}
\usepackage{enumitem}
\usepackage[colorlinks=false,linkcolor=blue,citecolor=blue,urlcolor=blue]{hyperref}
\mathtoolsset{showonlyrefs}
\usepackage{fancyhdr}
\hypersetup{pdftitle={ A Liouville theorem for geometric triangulations in the plane},pdfsubject={Bounded harmonic functions for the cotangent Laplacian}}
\DeclareMathOperator{\diam}{diam}
\DeclareMathOperator{\area}{area}
\newtheorem{theorem}{Theorem}[section]
\newtheorem{lemma}[theorem]{Lemma}

\newtheorem{corollary}[theorem]{Corollary}
\newtheorem*{recurrence}{Theorem~\ref{tran}}
\theoremstyle{definition}
\newtheorem{definition}[theorem]{Definition}
\theoremstyle{remark}
\newtheorem{remark}[theorem]{Remark}

\newcommand{\R}{\mathbb{R}}

\newcommand{\wind}{\operatorname{wind}}

\title{The Liouville Property for Geometric  Graphs in the Plane
}
\author{Kuijun Liu, Feng Luo, Yanwen Luo, Zhenghao Rao}
\date{}
\address{Department of Mathematics, Rutgers University--New Brunswick, Piscataway, NJ 08854, USA}
\email{kuijun.liu@rutgers.edu}

\address{Department of Mathematics, Rutgers University--New Brunswick, Piscataway, NJ 08854, USA}
\email{fluo@math.rutgers.edu}

\address{School of Mathematics and Physics, University of Science and Technology Beijing, Beijing, 100083, P.R. China} \email{yanwen.luo@ustb.edu.cn}

\address{Department of Mathematics, Rutgers University--New Brunswick, Piscataway, NJ 08854, USA}
\email{zhenghao.rao@rutgers.edu}

\subjclass[2020]{Primary 31C20; Secondary 52C26, 05C81, 60J45}

\keywords{Discrete Laplacian, cotangent weights, Delaunay triangulations, Liouville property, recurrent graphs}

\usepackage{lineno}

\begin{document}
\maketitle
\vspace{-2.5em}
\begin{abstract}

We show that every bounded discrete harmonic function with respect to the cotangent Laplacian on a Delaunay triangulation of the Euclidean plane is constant; equivalently, the cotangent-weighted triangulation has the Liouville property. 
Our main theorem establishes this property for a more general class of connected weighted geometric graphs in the plane. 
Furthermore, 
we show that a planar Delaunay triangulation with cotangent weights could be transient but is recurrent if edge lengths are uniformly bounded. 
\end{abstract}

\section{Introduction}

The classical Liouville theorem in complex analysis states that a bounded harmonic function on the plane is a constant. Its discrete version takes the form of a discrete harmonic function on a network. Recall that a network is a pair $(G, c)$ where $G=(V, E)$ is a graph and $c: E \to [0, \infty)$ is a non-negative undirected edge weight with $c_{ij}=c(\{i,j\})$. The discrete Laplacian $\Delta_c$ on the network is a linear operator sending functions defined on $V$ to functions defined on $V$. It is given by the formula

\begin{equation}\label{eq:lap}
\Delta_c(f)(i) =\sum_{j: \{i,j\} \in E} c_{ij}(f(j)-f(i)).
\end{equation}
A function $f: V \to \R$ with $\Delta_c(f)=0$ is called \emph{discrete $c$-harmonic} or simply \emph{discrete harmonic} when the weight $c$ is obvious.
A network is said to have the \emph{Liouville property} if every bounded discrete harmonic function is a constant. The goal of the paper is to establish a fairly general condition on geometric planar graphs that implies the Liouville property.

Recall a triangulation of the plane $\R^2$ is geometric if each triangle is a Euclidean triangle. Note that, by definition, a triangulation of $\R^2$ is always locally finite: each compact set intersects only a finite number of vertices, edges, and triangles. Similarly, a geometric planar graph, i.e., a planar framework,  is an embedded graph in $\R^2$ whose edges are line segments. Therefore, the one skeleton $T^{(1)}$ of $T$ is a natural geometric planar graph. We use $V=V(T)$ and $E$ to denote the set of vertices and edges in $T$, respectively, and use $p \in V$ to represent the vertex or its position in $\R^2$ for a geometric graph.   Our main result is the following

\begin{theorem}\label{thm:main-general}
Let $T$ be a geometric triangulation of $\R^2$ equipped with a nonnegative edge weight $c$, and $G_+=(V, E_+)$ with $E_+=\{ \{p,q\} \in E: c_{pq}>0\}$ be a subgraph of the 1-skeleton of $T$. 
If the restrictions of affine functions to the vertices $V$ of $T$ are discrete harmonic, i.e., for every $p\in V$,
\begin{equation}\label{eq:es}  
\sum_{q: \{p,q\} \in E} c_{pq}(p-q)=0, 
\end{equation}
then 
each component of $G_+$ with edge weights $c$ has the Liouville property.
\end{theorem}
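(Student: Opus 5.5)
The plan is probabilistic. I would use the random walk on a component and the fact, given by \eqref{eq:es}, that the coordinate functions are harmonic for it.

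\textbf{Setup.} Fix a component $H$ of $G_+$. If $H$ is a single vertex there is nothing to prove. Let $u$ be a bounded $c$-harmonic function on $H$, and let $(X_n)$ be the random walk on $H$ with transition probabilities $c_{pq}/\sum_{q'}c_{pq'}$. Edges with $c_{pq}=0$ contribute nothing to \eqref{eq:es}, so \eqref{eq:es} holds within $H$. Hence both coordinates of $X_n\in\R^2$ are martingales. Likewise $u(X_n)$ is a bounded martingale, so it converges a.s. to a limit $L$ with $u(p)=E_p[L]$. Liouville then reduces to showing that the law of $L$ does not depend on the starting point in an essential way. Concretely, for neighbouring $p,q$ and independent walks $X$ from $p$ and $Y$ from $q$, I would show $L_X=L_Y$ a.s., or at least that $|u(p)-u(q)|$ is smaller than every $\varepsilon>0$.

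\textbf{Step 1: the walk oscillates.} Each coordinate martingale either converges or satisfies $\limsup=+\infty$ and $\liminf=-\infty$ a.s. (the standard dichotomy for martingales with bounded increments on compact sets, combined with local finiteness of $T$). By local finiteness, convergence of $X_n$ forces $X_n$ to be eventually constant, which is impossible once $H$ has an edge. Every nonzero linear functional $\ell$ gives the martingale $\ell(X_n)$, so the same argument applies to it. Hence a.s. the polygonal path traced by $X$ along the straight edges of $T$ crosses every line in both directions infinitely often. Geometrically, \eqref{eq:es} says each vertex of $H$ lies in the relative interior of the convex hull of its $H$-neighbours; this is the local statement behind this global oscillation.

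\textbf{Step 2: planar crossing.} This is where planarity of $T^{(1)}$ enters. Since edges of $T$ meet only at vertices, two polygonal paths in $H$ can cross only at common vertices. Fix a large time $N$ at which $u(X_n)$ is within $\varepsilon$ of $L_X$ for all $n\ge N$, which has high probability. Using Step 1, I would build a closed curve out of a segment of the $X$-path after time $N$: wait until the path has gone around a large disk containing $q$ in all four directions, and close it up. The oscillation of $Y$ in every direction then forces $Y$ to exit this region, so its trace meets the trace of $X$ at some vertex $v$ after time $N$.

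By the strong Markov property at the first such hitting time, the future of $Y$ from $v$ has the law of a fresh walk from $v$. On the other hand, $u(v)$ is within $\varepsilon$ of $L_X$ because $v$ lies on the $X$-path after time $N$. Then $u(q)=E[u(Y_\tau)]$ by optional stopping, applied to a bounded martingale. Comparing this with $u(p)=E[L_X]$ gives $|u(p)-u(q)|\le 2\varepsilon+2\|u\|_\infty\cdot(\text{small probability})$. Letting $\varepsilon\to 0$ gives $u(p)=u(q)$.

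\textbf{Main obstacle.} The hard step is making the crossing argument in Step 2 rigorous. There are two concerns. First, a segment of the $X$-path need not be a simple closed curve. Second, $Y$ could a priori escape through the gap left when the $X$-loop is closed up. My first attempt would replace "closing the loop" by a winding-number argument using $\wind$ of the $X$-path about $q$. Step 1 applied to all directions $\ell$ should give that, with high probability, the $X$-path after time $N$ separates $q$ from infinity within some annulus. If that fails, the fallback is to work with level sets $\{u>a\}$ directly. Their boundaries are curves in the plane made of triangulation edges. Step 1 would then show that both walks must cross any such separating boundary, which again forces $L$ to be a.s. constant.
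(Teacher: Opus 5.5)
\textbf{There is a genuine gap: nothing in your proposal shows that the two walks' traces meet, and Step~1 is false as stated.}

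Your reduction is fine as far as it goes. Conditionally on $X$, the first time $\tau$ at which $Y$ hits $\{X_n:n\ge N\}$ is a stopping time for $Y$. Optional stopping for the bounded martingale $u(Y_n)$ then reduces everything to proving $P(\tau<\infty)\to 1$. That is the whole difficulty, and the mechanism you propose cannot deliver it.

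\emph{Step~2.} Oscillation of every linear functional along both walks does not force the two traces to meet. Nor does it force the tail of the $X$-trace to separate $q$ from $\infty$. Two interleaved polygonal spirals approximating $r=e^{\theta}$ and $r=e^{\theta+\pi}$ are disjoint, connected, and unbounded in every direction. For the same reason, a large winding number of the $X$-path about $q$ produces no closed loop around $q$, so the winding-number repair also fails. What you actually need is an intersection property: a walk from $q$ a.s.\ hits the tail of the trace of an independent walk from $p$. This implies Liouville but is in general strictly stronger (it fails on $\mathbb{Z}^5$, which is Liouville). For arbitrary balanced triangulations with unbounded edge lengths and weights, possibly transient as in Section~\ref{sec:transient}, it would require quantitative crossing estimates that \eqref{eq:es} alone does not provide.

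\emph{Step~1.} The convergence/oscillation dichotomy needs uniformly bounded increments. On $\{\sup_n\ell(X_n)<K\}$ the walk ranges over a half-plane containing arbitrarily long edges. A martingale with rare large upward jumps can stay bounded above while drifting to $-\infty$. Local finiteness rules out convergence of the vector $X_n$, but not of a single coordinate. For example, let $H$ be the path on $\{(n,0)\}_{n\in\mathbb{Z}}$ with unit weights, and set every other weight to $0$. Then \eqref{eq:es} holds, $y(X_n)\equiv 0$, and the walk never crosses the $x$-axis.

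\emph{The level-set fallback.} This points toward the actual proof, but its essential ingredients are missing. Since $u$ lives only on $H$, its level sets are not planar regions bounded by edges of $T$. The paper's route is:
\begin{enumerate}
\item Extend $u$ by monotone iteration to a bounded $\widetilde u$ on all of $V$. It is harmonic for positive averaging coefficients $a_{pq}$ with $\sum_q a_{pq}=1$ and $\sum_q a_{pq}q=p$. Hence $\widetilde u-A|_V$ is $a$-harmonic for every affine $A$.
\item A discrete maximum principle shows that, for the piecewise-affine interpolant $U$, neither $\{U>A\}$ nor $\{U<A\}$ has a bounded component, for every affine $A$. Constant $A$ would not suffice: $\tanh x+\tanh y$ has no bounded components of any super- or sub-level set, yet is not a function of one linear variable.
\item The Adel'son-Vel'skii theorem, a statement about continuous functions on $\R^2$, then replaces your crossing argument and gives $U(z)=f(\nu\cdot z)$.
\item Balance at the vertices on a break line of $f$ forces one of two outcomes. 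Either $f$ is affine, hence constant by boundedness, or $H$ lies in that line, where $u$ is constant.
\end{enumerate}
No probabilistic intersection estimate is needed.
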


Discrete harmonicity of an affine function appeared in the Maxwell-Cremona correspondence of planar geometric graphs with edge weights.
In this setting, suppose $(G,c)$ is a geometric planar graph with $c=\{c_{pq}\}$ as the edge-stresses. The stress $c$ is in the \emph{equilibrium state} if it satisfies equation \eqref{eq:es} for all $p\in V$,
which is equivalent to the condition that the restrictions of all affine functions to $V$ are discrete harmonic. For a finite geometric planar graph $G$, the Maxwell-Cremona theorem says that $c$ is in the equilibrium state if and only if one can assign to each connected component $\Omega$ of $\R^2 -G$ a point $g_{\Omega} \in \R^2$, called the dual vertex, 
such that the \emph{reciprocal dual equation} 
\begin{equation}\label{eq:dual} g_L -g_R = c_{pq}J (q-p)
\end{equation}
holds for each edge $\{p,q\} \in E$. Here $g_L$ (resp. $g_R$) is the point associated with the region to the left (resp. right) of the edge oriented from $p$ to $q$ and $J(x,y)=(-y,x)$ is the counterclockwise rotation by $\pi/2$.  Note that for any planar network $(G, c)$, even if it has infinitely many vertices,  equation  \eqref{eq:dual} implies the equilibrium stress equation \eqref{eq:es}. Indeed, summing up the dual equation \eqref{eq:dual} over all edges from a vertex $p$ %\ZR{($p$?)}
leads to the equilibrium equation \eqref{eq:es}. See a systematic treatment of Maxwell-Cremona correspondence in Chapter 13 of \cite{richtergebert1996realization}. 

From this new viewpoint,  we can rephrase Theorem \ref{thm:main-general} as follows. Suppose $T$ be a locally finite geometric triangulation of $\R^2$ with symmetric weights $c_{pq}=c_{qp}\ge 0$ assigned to its edges. Suppose that the graph $G$ formed by the edges with $c_{pq}>0$ is connected and  $(G, c)$ is in equilibrium stress $c$, then the network $(G,c)$ has the Liouville property.

Our result also implies the Liouville property on Delaunay triangulations with cotangent weights, a class of networks extensively studied in discrete conformal geometry. 
Recall that a geometric triangulation $T$ of $\R^2$ is \emph{Delaunay} if the open circumdisk $D$ of every triangle contains no vertex of $T$. Equivalently, for every edge of $T$, the sum of the two angles opposite  that edge is at most $\pi$. % Vertices are allowed to lie on the circumcircle $\partial D$. %This allows arbitrary triangulations of polygons whose vertices lie on one empty circle. 
Delaunay triangulations are the most commonly used tools in computational geometry \cite{devadoss2025discrete}.

 The finite-element approximation of the classical Laplacian $\frac{\partial^2}{\partial x^2} +\frac{\partial ^2}{\partial y^2}$  using a triangulation $T$ in the plane is the discrete cotangent Laplacian introduced by Duffin \cite{duffin1959distributed} on the 1-skeleton $T^{(1)}$ of $T$ with  weight  $w_{pq}$ at the edge $\{p,q\}$ \begin{equation}\label{eq:weights}
 w_{pq}=w_{qp}=\frac12\bigl(\cot\alpha_{pq}+\cot\beta_{pq}\bigr). 
\end{equation}
Here $\alpha_{pq}$ and $\beta_{pq}$ denote the angles opposite the edge $\{p,q\}$ in the triangulation. The weights $w_{pq}$ are non-negative due to the Delaunay condition that $\alpha_{pq}+\beta_{pq} \leq \pi$, and are commonly referred to as
the \emph{cotangent weight}. The corresponding discrete Laplacian, known as the cotangent Laplacian, is defined by 
\begin{equation}\label{eq:laplacian}
 \Delta_wu(p)=\sum_{q:\,\{p,q\}\in E}w_{pq}\bigl(u(q)-u(p)\bigr), 
\end{equation} for any $u: V(T) \to \R$.  See \cite{Pinkall-pothier}, \cite{Braess-finite-elements} for more details. Naturally, one would expect the Liouville property to hold for the cotangent Laplacian on Delaunay triangulations.

In the vertex-scaling discrete conformal geometry of surfaces \cite{luo}, the cotangent Laplacian is the Jacobian matrix of the map that sends the discrete conformal factor to the discrete curvature. The infinitesimal rigidity of ideal convex polyhedra in hyperbolic 3-space and the discrete uniformization of polyhedral surfaces  predicts that the Liouville property holds for cotangent-weight planar triangulations. 
This motivates us to investigate the Liouville property.  See also the work of Z. He \cite{he-rigidity} on Liouville property on nerves of planar circle packing.

The geometry of circumcircles of triangles shows that reciprocal dual equation holds for the cotangent weights, where we take the associated points $g_L$ and $g_R$ to be the centers of the circumdisks of triangles adjacent to the edge, i.e., $g_L-g_R =w_{pq} J(p-q)$.
See Figure \ref{fig:dual-weights}(b). 

\begin{figure}[htbp]
\centering
\input{fig_dual_weights.tikz}
\caption{Dual edges and geometric weights. (a) The reciprocal dual equation, with the edge oriented from $q$ to $p$, $L$ and $R$ its left and right faces, and $J(x,y)=(-y,x)$. The length ratio shown is for a positive weight. (b) Circumcenters give the cotangent weight, with $\alpha_{pq}$ and $\beta_{pq}$ opposite to the edge $pq$. (c) For a circle packing, the dual vertices are the incenters of the two triangles; $r_p,r_q$ are the packing radii and $\rho_L,\rho_R$ the triangle inradii, giving the Dubejko weight. 
}
\label{fig:dual-weights}
\end{figure}

It follows from the reciprocal dual equation that the restriction of affine functions to the triangulation is discrete harmonic with respect to the cotangent Laplacian. Also the graph formed by edges with strictly positive cotangent weights is connected since it is the dual graph to the Voronoi cells of vertices $V=V(T)$. By Theorem \ref{thm:main-general}, we have,

\begin{corollary}\label{thm:main}
Let $T$ be a locally finite Delaunay triangulation of $\R^2$. Then every bounded discrete harmonic function is constant with respect to the cotangent Laplacian.
\end{corollary}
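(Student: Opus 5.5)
The plan is to deduce Corollary \ref{thm:main} directly from Theorem \ref{thm:main-general}, applied to $T$ with the cotangent weight $c=w$ of \eqref{eq:weights}. There are three hypotheses to check: $w$ is nonnegative, $w$ satisfies the equilibrium equation \eqref{eq:es}, and the positive-weight graph $G_+$ is connected and spanning. Once these hold, $G_+$ has a single component containing every vertex. A bounded $w$-harmonic function on $T^{(1)}$ is then a bounded harmonic function on $(G_+,w)$, because edges with $w_{pq}=0$ contribute nothing to \eqref{eq:laplacian}. The Liouville property of that component then forces the function to be constant.

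\textbf{Nonnegativity and equilibrium.} Nonnegativity follows from the Delaunay condition $\alpha_{pq}+\beta_{pq}\le\pi$, since $\cot\alpha+\cot\beta=\sin(\alpha+\beta)/(\sin\alpha\sin\beta)\ge0$. For equilibrium, I would first verify the reciprocal dual equation $g_L-g_R=w_{pq}J(q-p)$, with $g_L,g_R$ the circumcenters of the two triangles adjacent to $\{p,q\}$. Both circumcenters lie on the perpendicular bisector of $pq$. The signed distance from the midpoint of $pq$ to the circumcenter of the triangle with angle $\alpha$ opposite $pq$ is $\tfrac12|p-q|\cot\alpha$, measured toward that triangle. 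Adding the two contributions gives $g_L-g_R=\tfrac12(\cot\alpha+\cot\beta)J(q-p)$, up to the orientation convention, which I will check once carefully.

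Next I sum this identity over the edges around a vertex $p$. The triangles around $p$ form a finite cycle by local finiteness, and the circumcenters telescope to zero. The result is $J\sum_q w_{pq}(q-p)=0$, and since $J$ is invertible this gives \eqref{eq:es}.

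\textbf{Connectivity of $G_+$.} This is the main obstacle. My approach is to identify the positive-weight edges with adjacent Voronoi cells. Let $\mathrm{Vor}(p)=\{x:|x-p|\le|x-q|\ \forall q\in V\}$; each cell is a closed convex set, and the cells cover $\R^2$ and are locally finite. I claim that $w_{pq}>0$ exactly when $\{p,q\}\in E$ and $\mathrm{Vor}(p)\cap\mathrm{Vor}(q)$ is a segment of positive length. That segment is the dual Voronoi edge $[g_L,g_R]$, and its length $w_{pq}|p-q|$ is nonzero precisely when the two circumcenters differ.

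\textbf{From the Voronoi picture to connectivity.} Connectivity then follows by an argument in the plane. Take a segment $\gamma$ from $p$ to $p'$ in general position, so that it avoids the vertices of the Voronoi diagram. Finitely many cells meet $\gamma$, and consecutive cells along $\gamma$ share a positive-length boundary edge. By the claim, each such pair is joined by an edge of $G_+$.

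The delicate part is the degenerate cocircular case. There several vertices share a circumcenter, some triangulation edges have zero weight, and Voronoi vertices coincide. I must then confirm two things: that two cells sharing a positive-length boundary piece do correspond to a Delaunay edge of $T$, and that the bisector segment is exactly $[g_L,g_R]$. I would handle this with the empty-circle property. A point in the relative interior of $\mathrm{Vor}(p)\cap\mathrm{Vor}(q)$ is the center of a disk with $p,q$ on its boundary and no vertex inside it or on its boundary. Moving this center along the bisector until the disk meets a third vertex exhibits triangles of $T$ adjacent to $pq$. This requires using that every maximal empty-circle polygon is triangulated by $T$.
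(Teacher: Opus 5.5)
Your proposal is correct and follows the paper's route: the paper deduces the corollary from Theorem~\ref{thm:main-general}. It notes that the circumcenters satisfy the reciprocal dual equation, so affine functions are $w$-harmonic, and that the positive-weight graph is connected because it is the dual graph of the Voronoi cells. You supply details the paper leaves implicit, namely the telescoping of circumcenters around a vertex and the empty-circle argument showing that Voronoi cells sharing a positive-length side give edges of $T$ with $w_{pq}>0$ even in the cocircular case (one small fix: a segment with fixed endpoints $p,p'$ cannot be perturbed into general position, so use a polygonal path avoiding the discrete set of Voronoi vertices instead).
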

Dai--Wu~\cite[Proposition~4.3]{DaiWu} proved the above corollary under extra assumptions on $T$.

It turns out Corollary \ref{thm:main} is the tip of an iceberg of a whole family of geometric triangulations with weights coming from various different discrete conformal schemes for discretizing classical conformality. The cotangent Laplacian arises from the vertex-scaling discretization of conformal geometry; see \cite{luo, Schlenker2001, fillastre2008polyhedral,bobenko-pinkall-springborn, glsw}. The circle packing theory \cite{thurstons-notes}, \cite{stephenson-circle-packing-book} is another prominent discrete conformality. The associated Dubejko edge weight \cite{dubejko1997random} is constructed from Figure 1 (c), where the reciprocal dual equation holds. In \cite{glickenstein2011}, Glickenstein introduced a family of discretization schemes for 2-dimensional conformal structures, including vertex scaling and circle packing, and showed that each discrete conformal scheme in the family induces natural edge weights $c$ generalizing both cotangent weights and Dubejko weights on the 1-skeleton of triangulations of surfaces satisfying the reciprocal duality equation~\eqref{eq:dual}, which gives equation~\eqref{eq:es}.
Therefore, the Liouville property holds for all such triangulations  of the plane. See Section~\ref{sec:consequences} for detailed discussions.

A related question concerns recurrence of the network $(T^{(1)}, w)$, where $T^{(1)}$ is the 1-skeleton of the triangulation $T$ and $w$ is the cotangent weights. Recall that a network is said to be \it recurrent \rm if the random walk almost surely returns to its starting vertex infinitely often, and \it transient \rm otherwise \cite{BenSch1}.
It is well known that a recurrent network has the Liouville property, but the converse fails even for planar Delaunay triangulations with cotangent weights. 

\begin{theorem} \label{tran} If a Delaunay triangulation $T$ of the plane is such that the diameter of its Voronoi cells $C_v$ grows at most linearly in $|v|$ for vertices $v$ of $T$, then its 1-skeleton with the cotangent weights is recurrent.  Furthermore, 
there exists a Delaunay triangulation of the plane such that its 1-skeleton with the cotangent weights is transient.
\end{theorem}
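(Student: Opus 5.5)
For the recurrence half I would use the Nash-Williams criterion with cutsets given by annuli of Voronoi cells. The key identity is that for an edge $\{p,q\}$ of a Delaunay triangulation, the dual Voronoi edge $e^*$ joins the two adjacent circumcenters. By the reciprocal dual equation, $|e^*| = w_{pq}|p-q|$, so $w_{pq} = |e^*|/|e|$.

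To avoid bounding $|e|$ from below I would work with energy directly. Let $u(p)=\log|p|$ (suitably truncated), or a radial test function $f(p)=\phi(|p|)$. Recurrence is equivalent to the existence of finitely supported functions $f_n$ with $f_n(v_0)=1$ and Dirichlet energy $\sum w_{pq}(f_n(p)-f_n(q))^2\to 0$.

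The plan is to take $f_n(p)=\psi_n(p)$, where $\psi_n(x)=1-\log|x|/\log n$ on $1\le|x|\le n$, truncated to $[0,1]$, i.e.\ the standard continuum capacity minimizer for an annulus. Then:

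1. For an edge with endpoints in the annulus, $|f(p)-f(q)|\le |p-q|\sup_{[p,q]}|\nabla\psi_n|$.
2. Hence $w_{pq}(f(p)-f(q))^2\le |e^*|\,|e|\,\sup|\nabla\psi_n|^2$.
3. The quantity $\tfrac12|e||e^*|$ is the area of the kite (quadrilateral with diagonals $e$ and $e^*$) associated to the edge. These kites, as is standard, tile the plane with bounded overlap; signed areas may need care when circumcenters fall outside their triangles, but for Delaunay triangulations the signed kites tile exactly.

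The linear growth hypothesis is used here. The Voronoi cells $C_p,C_q$ containing the kite have diameter at most $K|p|$. I would actually need the stronger statement that $\diam C_v\le \epsilon|v|+C$ with $\epsilon<1$, or else rescale the test function dyadically, so that all points of the kite lie at distance comparable to $|p|$ from the origin. Then $\sup|\nabla\psi_n|^2$ over the kite is comparable to $1/(|x|^2\log^2 n)$ at any point $x$ of the kite.

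4. Summing over edges gives energy $\lesssim \int_{1\le|x|\le Cn} \frac{dx}{|x|^2\log^2 n}\asymp 1/\log n\to0$.

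Edges with an endpoint near the origin, or straddling the boundary, are handled the same way. The main obstacle is the comparability in step 3 when diameters are only $O(|v|)$ with a large constant. There, I would replace $\psi_n$ by a function of $\log|x|$ that is linear at scale $\log(1+K)$ steps, i.e.\ use annuli with ratio $\lambda\gg K$. A kite meets only boundedly many such annuli, and the same $1/\log n$ bound follows.

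For transience I would construct an explicit Delaunay triangulation whose cotangent network contains, up to bounded conductance changes, a transient tree-like or $\mathbb Z^3$-like structure. A cleaner route is to use weights blowing up: put vertices so that along many long thin "channels" the edges are nearly degenerate with huge cotangent weights. An edge with opposite angles near $0$ has $w$ huge, and these act as near short-circuits.

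Concretely, I would take vertices on concentric circles of radii $r_k$, with $n_k$ equally spaced points on circle $k$, chosen so that the triangulation between consecutive circles is Delaunay (symmetric configurations are cocircular-safe). The resulting network is rotationally symmetric. Effective resistance to infinity is then computed by shorting each circle: the radial conductance between circles $k$ and $k+1$ is about $n_k\cdot(\text{spacing along circle})/(r_{k+1}-r_k)$. This comes from $w=|e^*|/|e|$, with $e^*$ of length roughly the circle spacing times angle factors.

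I would choose $r_{k+1}-r_k$ tiny relative to the spacing $2\pi r_k/n_k$, which is allowed since cells then become long thin shapes with large diameter. This makes the conductances $C_k$ satisfy $\sum 1/C_k<\infty$. Finally, I need that the shorted lower bound on resistance equals the true value, which holds by symmetry: the unit-current flow from the center is rotationally symmetric, so the circles are already equipotential. The main check is the Delaunay condition for alternating angular offsets between circles, which I would verify by the opposite-angle-sum criterion.
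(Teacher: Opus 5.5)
Your recurrence half is essentially the paper's argument. It uses the same logarithmic cutoff and the identity $w_{pq}=|e^*|/|e|$. Your regions of area $\tfrac12|e|\,|e^*|$ (each the union of the two cones from $p$ and $q$ over $e^*$) tile the plane, which is the paper's formula $\area(C_v)=\frac14\sum_u w_{uv}|u-v|^2$.

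The obstacle you flag is not real. Bound the difference with the mean value theorem for the radial profile: $|f_n(u)-f_n(v)|\le |u-v|/\bigl((1+|v|)\log n\bigr)$ when $|v|\le|u|$. Charge the edge to the cell $C_v$ of the endpoint with smaller modulus, and use only the upper bound $1+|x|\le K(1+|v|)$ for $x\in C_v$. Since $(1+|x|)^{-2}$ is decreasing, you need no lower comparability, no $\epsilon<1$, and no dyadic rescaling. A lower bound is free anyway: if the origin is a vertex, $x\in C_v$ gives $|x-v|\le|x|$, so $|x|\ge|v|/2$.

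This route also repairs an unjustified step. You bound by $\sup_{[p,q]}|\nabla\psi_n|$ and then treat it as a supremum over the kite. But when an angle opposite $e$ is obtuse, $e^*$ does not cross $e$, and the segment $[p,q]$ is not contained in the kite.

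The transience half has a genuine gap: the construction you describe is recurrent, and your claim $\sum_k 1/C_k<\infty$ is false. By your own estimate, the conductance between circles $k$ and $k+1$ is about $n_k\cdot(2\pi r_k/n_k)/(r_{k+1}-r_k)=2\pi r_k/(r_{k+1}-r_k)$. The spacing cancels, and this is just the continuum conductance of the annulus. Local finiteness forces $r_k\to\infty$, so $\sum_k\frac{r_{k+1}-r_k}{2\pi r_k}\ge\frac{1}{2\pi}\sum_k\log\frac{r_{k+1}}{r_k}=\infty$. Nash-Williams (shorting the circles can only lower the resistance) then gives infinite effective resistance from the center, i.e. recurrence.

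Making each radial edge heavy (weight about $s/h$) does not help, because you must cross about $r/h$ such layers per dyadic scale. The network is just a discretization of the flat Dirichlet energy. Indeed, with gaps small compared with the spacing and, say, at least seven points per circle, every Voronoi cell has diameter $O(|v|)$, so the first half of the theorem itself forces recurrence. Your symmetry argument for equality with the shorted network is therefore moot.

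The missing idea is that transience requires Voronoi cells of superlinear size. Circumcenters must be far away relative to $|v|$, arranged so that edges with a tiny opposite angle form one path crossing all scales. The paper takes $O$ and the vertices $(\pm4^n,\pm2^n)$. Consecutive rectangles are joined by isosceles, hence cyclic, trapezoids, and the result is Delaunay. The edge $A_kA_{k+1}$ has opposite angles with cotangents $2^{-k}$ and $5\cdot 2^k$, so its weight is at least $2^k$. The path $OA_0A_1A_2\cdots$ then has finite total resistance, and Rayleigh monotonicity gives transience.
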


\medskip
\noindent
 {\bf Organization.}  
% In Section 2, we recall the basics about the discrete Laplacian on networks. 
In Section~\ref{sec:geometric}, we recall the Adel'son-Vel'skii theorem and prove the main result, Theorem \ref{thm:main-general}.  Theorem \ref{tran} is proved in Section~\ref{sec:transient}.
In Section~\ref{sec:consequences}, we discuss several applications of Theorem \ref{thm:main-general} on discrete conformal schemes by verifying that affine functions are discrete harmonic.  In Appendix~\ref{sec:av-proof}, we reproduce a proof of the Adel'son-Vel'skii theorem.

\medskip
\noindent
 {\bf Acknowledgments.}  
  The work of F. L. is supported in part by NSF grants DMS 2220271 and DMS 2501286, and by Simons Foundation grant SFI-MPS-SFM-00011051. The work of Z. R. is partially supported in part by NSF grant DMS-2603932. The work of Y. L. is supported by National Natural Science Foundation of China under Grant No. 12601117. 

\medskip
\noindent
 {\bf Tool and computational resource disclosure.}  
The key idea of the proof of Theorem \ref{thm:main-general} is produced with the assistance of ChatGPT, which brought the link between discrete harmonic functions and the Adel'son-Vel'skii theorem.
The original question on the Liouville and recurrence properties of the cotangent Laplacian on planar graphs was posed by the authors. 
The authors also used ChatGPT to proofread the manuscript for grammar and language.

The authors take full responsibility for the content of the article.

\section{The Adel'son-Vel'skii theorem and a Proof of Theorem \ref{thm:main-general}}\label{sec:geometric}

We first  recall the Adel'son-Vel'skii theorem in this section. Since its proof is difficult to find in the literature, a proof is provided in the Appendix~\ref{sec:av-proof}. For a continuous function $U$ and an affine function $A$ on $\R^2$, a connected component of $\{U>A\}$ or $\{U <A\}$ in the plane can be viewed as the vertical projection of a portion of the graph of $U$ cut off by the plane $z=A(x)$. 
We use $u \cdot v$ and $\langle u, v \rangle $ to denote the dot product of two vectors $u, v$.

\begin{theorem}[Adel'son-Vel'skii~\cite{AV45}]\label{thm:AV}
Let $U:\R^2\to\R$ be continuous and have sublinear growth. Suppose that for every affine function $A:\R^2\to\R$, every connected component of $\{z\in\R^2:U(z)>A(z)\}$ and of $\{z\in\R^2:U(z)<A(z)\}$ is unbounded. Then there are a unit vector $a\in\R^2$ and a continuous function $f:\R\to\R$ such that $U(z)=f(a\cdot z)$ for all $z\in\R^2$.
\end{theorem}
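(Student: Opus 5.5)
The plan is to reduce Theorem~\ref{thm:AV} to one geometric claim.

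\emph{Claim.} If $U(z_1)=U(z_2)=c$ for two distinct points $z_1,z_2$, then $U\equiv c$ on the whole line $\ell$ through $z_1$ and $z_2$.

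Granting the claim, the conclusion is quick. If $U$ is constant we are done. Otherwise every level set $\{U=c\}$ is a union of full lines. Two level sets with different values are disjoint, so their lines cannot cross and must be parallel. Within a single level set, two non-parallel lines would meet, and then the claim applied to points on both lines, together with the parallel lines of other level values, would force a contradiction. So all lines occurring in level sets share one direction $a^\perp$. Hence $U(z)$ depends only on $a\cdot z$, and $f(t)=U(ta)$ is continuous. One also checks that every point lies on such a line: take $z_0$ and any nearby point where $U$ takes the value $U(z_0)$, which exists by the intermediate value theorem on small circles, since $U$ has no local extremum. That last fact is the case of constant $A$ in the hypothesis.

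To prove the claim, first consider a point $w$ strictly between $z_1$ and $z_2$. Suppose $U(w)>c$; the case $U(w)<c$ is symmetric using $\{U<A\}$. Let $n$ be a unit normal to $\ell$ and pick $0<\delta<U(w)-c$. For small $\varepsilon>0$ consider the two tilted affine functions
\[
A_\pm(z)=c+\delta\pm\varepsilon\, n\cdot(z-z_1).
\]
Then $A_\pm(w)=c+\delta<U(w)$ and $A_\pm(z_i)=c+\delta>U(z_i)$. By hypothesis, the component $\Omega_\pm$ of $\{U>A_\pm\}$ containing $w$ is unbounded. Sublinear growth means $U(z)=o(|z|)$, while $A_\pm$ grows linearly off $\ell$. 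So far from the origin, $\Omega_+$ lies in a thin neighborhood of the half-plane $\{n\cdot(z-z_1)<0\}$, widened by an $o(|z|)$ margin. Likewise $\Omega_-$ lies near the opposite half-plane.

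Take paths $\gamma_\pm\subset\Omega_\pm$ from $w$ to infinity. Together they form a proper curve $\gamma$ through $w$ that escapes on opposite sides of $\ell$. Along $\gamma$ we have $U>c+\delta-\varepsilon|n\cdot(z-z_1)|$.

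Now look at $z_1$ and $z_2$, which lie on opposite sides of $w$ along $\ell$. The component of $\{U<A_\pm\}$ containing $z_1$ is unbounded, and so is the one containing $z_2$. We want to show these components cannot both escape without crossing $\gamma$, using the Jordan curve theorem for the proper arc $\gamma$. Sublinearity again confines the escape of these components to the region near the side where $A_\pm\to+\infty$. By choosing the sign of the tilt appropriately, and possibly $\varepsilon$ after $\delta$, one of $z_1,z_2$ is trapped in a complementary region of $\gamma$. In that region, the asymptotic confinement forbids reaching infinity inside $\{U<A\}$, which contradicts the hypothesis. Hence $U=c$ on the segment $[z_1,z_2]$.

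For points $w$ on $\ell$ outside the segment, say beyond $z_2$, the same argument does not apply directly. Instead, use that $U$ equals $c$ on a whole segment: take $z_1'\in(z_1,z_2)$, and redo the argument with the roles arranged so that $z_2$ lies between $z_1'$ and $w$. Alternatively, iterate the argument with $(z_1,z_2)$ replaced by pairs that $w$ separates, after first establishing $U(w)=c$ by a sign argument.

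The main obstacle I expect is this topological separation step. The hard part is making precise that the $o(|z|)$ fattening of the half-planes still keeps the escaping components of $\{U<A_\pm\}$ on the correct side of $\gamma$. This needs careful quantifier order: fix $\delta$, then $\varepsilon$, then the radius beyond which sublinearity gives $|U|<\varepsilon|z|/4$. The other delicate point is extending the claim from the segment to the full line.
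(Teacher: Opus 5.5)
Your reduction rests on a Claim that is false, so no tightening of the separation step can complete the proposal. Every $U(x,y)=f(x)$ with $f$ continuous and sublinear satisfies the hypotheses. For $A=px+qy+r$ with $q\neq 0$, each of $\{U>A\}$ and $\{U<A\}$ is the open region strictly on one side of the graph of the continuous function $x\mapsto (f(x)-px-r)/q$, hence connected and unbounded. For $q=0$ their components have the form $J\times\R$ with $J$ an open interval.

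Now take $U(x,y)=-\sqrt{|x|}$ (or the bounded $-\min\{|x|,1\}$), $z_1=(-1,0)$, $z_2=(1,0)$ and $c=-1$. Then $U(z_1)=U(z_2)=c$ but $U(0,0)=0>c$, so $U$ is not constant even on the segment $[z_1,z_2]$. More generally, suppose $U=f(a\cdot z)$ is nonconstant and $f(t_1)=f(t_2)$ for some $t_1\neq t_2$. Applying your Claim to a point on $\{a\cdot z=t_1\}$ and a point on $\{a\cdot z=t_2\}$ would force $f$ to be constant. So the Claim amounts to asserting that $f$ is strictly monotone, which is false and is not what the theorem says.

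The same example shows where your trapping step breaks. Work in your case $U(w)>c$, with $w=(0,0)$, $\ell$ the $x$-axis, $n=(0,1)$ and $0<\delta<1$. Then $\{U>A_+\}=\{y<(1-\delta-\sqrt{|x|})/\varepsilon\}$ and $\{U>A_-\}=\{y>(\sqrt{|x|}-1+\delta)/\varepsilon\}$, so $\gamma$ can be taken to be the $y$-axis. However, the rays $\{(x,0):x\le -1\}$ and $\{(x,0):x\ge 1\}$ lie in both $\{U<A_+\}$ and $\{U<A_-\}$. Thus $z_1$ and $z_2$ reach infinity inside these sets, each in its own complementary region of $\gamma$, without touching $\gamma$. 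Confining the components to an $o(|z|)$-neighbourhood of a half-plane bounded by $\ell$ does not rule out escape along $\ell$ itself, and no choice of $\delta$, $\varepsilon$ or the tilt changes this.

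What is actually true is weaker: the level sets consist of lines in one common direction, and that direction cannot be read off from an arbitrary pair of equal values. The paper obtains it in two stages.
\begin{enumerate}
\item A winding-number argument (Lemma~\ref{lem:disconnected}) gives a nonconstant affine $A$ with $\{U>A\}$ disconnected.
\item Sublinearity (Corollary~\ref{cor:component}) makes one component thin about a line.
\item On that component, the slice maximum $h(t)$ and its rightmost maximizer $m(t)$ are convex by the comparison principle (Lemma~\ref{lem:comparison}). Being sublinear, they are constant, which produces one level line (Lemma~\ref{lem:level-line}).
\item The closed set of positions of level lines parallel to it has no gaps. Any gap strip or half-plane would contain a thin component of some $\{\pm U>A'\}$, hence another level line.
\end{enumerate}
An argument of this two-stage kind, first one level line and then all lines parallel to it, has to replace your Claim.
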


The Adel'son-Vel'skii theorem can also be founded in Landis~\cite[Chapter~I, \S0, p.~5]{Landis1963}. It implies the classical Liouville theorem in the plane, using the maximum principle for harmonic functions. Note that Adel'son-Vel'skii's theorem does not hold in dimension three or higher.

We now prove Theorem \ref{thm:main-general}.  
 Call two distinct vertices $p,q$ in a graph $(V,E)$ \it neighbors \rm if they are the endpoints of the same edge.

 \begin{proof}[Proof of Theorem~\ref{thm:main-general}] 
We begin by showing that affine functions are discrete harmonic. 
\begin{lemma}\label{lem:affine-harmonic}
    Suppose $A: \R^2\to\R$ is an affine function. Then the restriction of $A$ to $V$ is discrete harmonic.
\end{lemma}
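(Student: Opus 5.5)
Write the affine function as $A(z)=\langle a,z\rangle+b$ with $a\in\R^2$ and $b\in\R$. For a vertex $p\in V$ we compute the Laplacian directly from \eqref{eq:lap}. By local finiteness of $T$, the sum runs over finitely many neighbors $q$ of $p$. Each term satisfies
\begin{equation}
c_{pq}\bigl(A(q)-A(p)\bigr)=c_{pq}\langle a,q-p\rangle,
\end{equation}
since the constant $b$ cancels. Summing over $q$ and using linearity of the inner product gives
\begin{equation}
\Delta_c A(p)=\Bigl\langle a,\sum_{q:\{p,q\}\in E}c_{pq}(q-p)\Bigr\rangle=-\Bigl\langle a,\sum_{q:\{p,q\}\in E}c_{pq}(p-q)\Bigr\rangle .
\end{equation}
By the equilibrium hypothesis \eqref{eq:es}, the vector sum vanishes, so $\Delta_c A(p)=0$ for every $p\in V$.

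We also note that it does not matter whether the sum is taken over $E$ or over $E_+$. Edges with $c_{pq}=0$ contribute nothing, so $A|_V$ is harmonic for the network $(G_+,c)$ as well. In particular, it is harmonic on each component of $G_+$, which is the form in which the lemma will be used later.

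There is no real obstacle here. The only points requiring care are the finiteness of the sum, which local finiteness of $T$ guarantees, and the observation that the equilibrium equation \eqref{eq:es} is precisely the vector form of harmonicity for the two coordinate functions. Harmonicity of constants is then immediate because $\Delta_c$ kills constants.
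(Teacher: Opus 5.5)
Your proposal is correct and follows essentially the same route as the paper: you write $A(x)=\langle x,a\rangle+k$, pull the constant vector $a$ out of the finite sum, and invoke the equilibrium condition \eqref{eq:es} to conclude $\Delta_c A|_V(p)=0$. Your additional remarks, that edges with $c_{pq}=0$ contribute nothing and that local finiteness makes the sum finite, are harmless and accurate.
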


\begin{proof}
    The affine function $A$ has the form
    \begin{equation}
        A(x)= \langle x, a \rangle + k,
    \end{equation}
    for some unique $a\in\R^2$ and $k\in\R$. Therefore, for any vertex $p\in V$, by \eqref{eq:lap}, we have
    \begin{equation*}
        \Delta_cA|_V(p)=\sum_{q:\,\{p,q\}\in E}c_{pq}\bigl(A(q)-A(p)\bigr)
        =\sum_{q:\,\{p,q\}\in E}c_{pq}\langle q-p,a\rangle
        =\left\langle \sum_{q:\,\{p,q\}\in E}c_{pq}(q-p), a\right\rangle = 0.\qedhere
    \end{equation*}
\end{proof}

Let $H$ be a connected component of $G_+$ equipped with the restriction of $c$ on edges of $H$, $V(H)$ be the vertex set of $H$, and  $u:V(H)\to\mathbb{R}$ be a bounded  $c$-harmonic function.
%\textcolor{blue}{the terminology $c$-harmonic }\ZR{It's probably better to use ``$c$-harmonic'' when introducing it in the introduction, since we also use $a$-harmonic. Feng: Done}. 
A singleton component $H$ is trivial. So we may assume $H$ contains an edge. In this case, $H$ must be an infinite graph due to equation \eqref{eq:es}. Indeed, \eqref{eq:es} implies every vertex $p \in V(H)$ is in the convex hull of its neighboring vertices in $H$. The last property cannot hold for a finite set $V(H)$.  Adding a constant preserves
harmonicity and the desired conclusion, so we may assume $0\leq u\leq M$ for
some finite $M$.

In the first step, we choose averaging coefficients $a_{pq}\geq0$ for directed edges from $p$ to $q$ as follows. Note that $a_{pq}$ may be different from $a_{qp}$. 
For $p\in V(H)$, put
\[
  a_{pq}=\frac{c_{pq}}{\sum_{r :\{r, p\} \in E} c_{pr}}.
\]
For $p\notin V(H)$, choose $a_{pq}>0$ for every neighbor $q$ of $p$ so that
\begin{equation}\label{eq:averaging}
  \sum_{q: \{q,p\} \in E} a_{pq}=1,
  \qquad
  \sum_{q: \{q,p\} \in E} a_{pq}q=p.
\end{equation}
Such coefficients exist because $p$ lies in the interior of the convex hull
of its neighbors in $T^{(1)}$.  The identities \eqref{eq:averaging} also hold on $p \in V(H)$ by the
equilibrium assumption \eqref{eq:es}.  For $p \in V(H)$, by definition, every
neighbor $q$ with $a_{pq}>0$ belongs to $V(H)$, and harmonicity gives
$u(p)=\sum_{q: \{q,p\} \in E} a_{pq}u(q)$.

Note that, by construction, for all $p \in V$, 
\begin{equation} \label{eq:affine} \sum_{q: \{q,p\} \in E} a_{pq}(p-q)=0.
    \end{equation}
Indeed, if $p \in V(H)$, it follows from the definition of $H$ and $a_{pq}$, if $p \notin V(H)$, it follows from \eqref{eq:averaging}. 

 Having constructed the new positive edge weights $a_{pq}$, we now extend  $u$ to a discrete $a$-harmonic function $\widetilde{u}$ on $V$ by repeated weighted averaging.
Start with
\[
  u_0(p)=
  \begin{cases}
    u(p), & p\in V(H),\\
    0,    & p\notin V(H).
  \end{cases}
\]
Keep the values on $H$ fixed and, for $n=0,1,2,\ldots$, define
\begin{equation}\label{eq:iteration}
  u_{n+1}(p)=
  \begin{cases}
    u(p), & p\in V(H),\\[2pt]
    \displaystyle\sum_{q:\{q,p\}\in E} a_{pq}u_n(q), & p\notin V(H).
  \end{cases}
\end{equation}
Thus each new value outside $V(H)$ is the weighted average of the previous values
at neighboring vertices. The update is simultaneous.

Since $u_0\geq 0$, we have $u_1\geq u_0$. If $p \notin V(H)$, then $u_2(p) =\sum_{q: \{q,p\} \in E} a_{pq}u_1(q) \geq \sum_{q: \{q,p\} \in E} a_{pq} u_0(q) =u_1(p)$, i.e., $u_2 \geq u_1$. Since nonnegative weighted averages preserve
inequalities, so induction gives
\[
  0\leq u_0(p)\leq u_1(p)\leq u_2(p)\leq\cdots\leq M.
\]
The upper bound follows because the coefficients sum to one. Hence the
pointwise limit exists, and we define, for each $p \in V$, 
\begin{equation}\label{eq:limit}
  \widetilde{u}(p):=\lim_{n\to\infty}u_n(p).
\end{equation}
It satisfies $\widetilde{u}=u$ on $V(H)$ and $0\leq\widetilde{u}\leq M$.
For $p\notin V(H)$, pass to the limit in the finite sum \eqref{eq:iteration};
for $p\in V(H)$, use the harmonicity of $u$ and the fact that all
positive-coefficient neighbors lie in $V(H)$. Therefore
\begin{equation}\label{eq:harmonic}
  \widetilde{u}(p)=\sum_{q: \{q,p\} \in E} a_{pq}\widetilde{u}(q)
  \qquad\text{for every }p\in V.
\end{equation}

Let $U:\mathbb{R}^2\to\mathbb{R}$ be the continuous piecewise affine interpolant
of $\widetilde{u}$ on $T$. It is bounded because each value in a triangle is a
convex combination of its vertex values.

\begin{lemma}
\label{lem:no-caps}
Suppose $H$ is a connected component of the graph of positive $c$-weights and $u$ is a discrete $c$-harmonic function on $H$.  Then for every affine function $A$, neither $\{U>A\}$ nor $\{U<A\}$ has a bounded connected component.
\end{lemma}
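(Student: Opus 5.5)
The plan is to argue by contradiction with a discrete maximum principle. Suppose $\Omega$ is a bounded connected component of $\{U>A\}$; the case $\{U<A\}$ follows by replacing $(U,A)$ with $(-U,-A)$, since $-\widetilde u$ satisfies \eqref{eq:harmonic} as well. Set $F=U-A$. Because $A$ is affine, $A$ restricted to each triangle is its own linear interpolant, so $F$ is the piecewise affine interpolant on $T$ of the vertex function $f=\widetilde u-A|_V$. By \eqref{eq:harmonic} and \eqref{eq:averaging} (equivalently \eqref{eq:affine}), $A|_V$ is $a$-harmonic with respect to the averaging coefficients $a_{pq}$. Hence
\[
f(p)=\sum_{q:\{q,p\}\in E}a_{pq}f(q)\qquad\text{for every }p\in V.
\]

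First I show that $\Omega$ contains a vertex. $\Omega$ is open, nonempty, and bounded, and $F>0$ on $\Omega$ while $F=0$ on $\partial\Omega$. If $\Omega$ contained no vertex, then $\Omega$ would meet some triangle $\tau$ (or edge) at a point where $F>0$. In that case $F$ would have a positive interior maximum over $\overline\Omega$ at some point. Since $F$ is affine on each closed triangle and $\overline\Omega$ is compact, this maximum is also attained at a vertex of $T$ lying in $\overline\Omega$. To see this, take a maximizer $z$ and let $\sigma$ be the smallest cell of $T$ containing $z$. If $z$ is not a vertex, then $F$ is affine on $\sigma$, and the fact that $z$ is a local maximum of $F$ relative to $\Omega$ (an open set containing $z$) forces $F$ to be constant on $\sigma$. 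Then $F>0$ on all of $\overline\sigma$, so $\overline\sigma\subset\Omega$ by connectedness, and the vertices of $\sigma$ lie in $\Omega$.

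So let $m=\max_{\overline\Omega}F>0$. The argument above gives a vertex $p\in\Omega$ with $f(p)=m$. Consider the set $S=\{p\in V\cap\Omega: f(p)=m\}$, which is nonempty and finite because $T$ is locally finite and $\Omega$ is bounded. Take $p\in S$. Every neighbor $q$ of $p$ in $T^{(1)}$ satisfies $f(q)\le m$ whenever $q\in\overline\Omega$. For a neighbor $q$ outside $\overline\Omega$, the segment $[p,q]$ leaves $\Omega$, so $F$ vanishes somewhere on it. Since $F$ is affine on $[p,q]$, this forces $f(q)<0<m$.

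The key point, and the step I expect to need the most care, is that the coefficients $a_{pq}$ need not be positive for every neighbor. For $p\notin V(H)$ they are all positive by construction. For $p\in V(H)$ only the $c$-positive neighbors carry weight, so the maximum principle only propagates $m$ along edges with $a_{pq}>0$. Since $f(p)=m$ is a weighted average with nonnegative weights summing to one, every $q$ with $a_{pq}>0$ satisfies $f(q)=m$, and hence $q\in\Omega$ and $q\in S$. Thus $S$ is closed under passing to neighbors $q$ with $a_{pq}>0$. To contradict finiteness I use \eqref{eq:affine}: $p$ is a convex combination of the $q$ with $a_{pq}>0$, and these are not all equal to $p$. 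Choose $p\in S$ maximizing a generic linear functional $\ell$ over the finite set $S$. Then $\ell(p)=\sum_q a_{pq}\ell(q)$ with every such $q\in S$, so every $q$ with $a_{pq}>0$ satisfies $\ell(q)=\ell(p)$. Because $\ell$ is generic it is injective on the finite set $S$, so each such $q$ equals $p$, which is impossible since the positive-coefficient neighbors are distinct from $p$. This contradiction shows that $\{U>A\}$ has no bounded component.
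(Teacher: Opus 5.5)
Your proof is correct, but it reaches the final contradiction by a different route from the paper.

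The first half matches the paper. Both arguments pass to $f=\widetilde u-A|_V$, which is harmonic for the averaging coefficients $a_{pq}$. Both show that a maximizer of $f$ on the finite set $V\cap\Omega$ forces the same maximal value at every neighbor $q$ with $a_{pq}>0$, and that such $q$ stay in $\Omega$. You locate $\max_{\overline\Omega}(U-A)$ at a vertex via the smallest-cell argument. The paper simply takes a vertex of a triangle containing a point of $\Omega$ and then maximizes over $V\cap\Omega$. Either works.

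The difference is in how finiteness is contradicted. The paper follows an edge path in $T^{(1)}$ from the maximum set to the first vertex of $H$. Every intermediate vertex lies outside $V(H)$ and so has all $a_{pq}>0$, which shows the maximum set meets $H$. Connectedness of $H$ then puts all of $V(H)$ inside the finite maximum set, contradicting the earlier observation that $V(H)$ is infinite.

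You instead apply the extreme-point argument directly to the maximum set $S$. Since $\sum_q a_{pq}=1$ and $\sum_q a_{pq}(p-q)=0$ at every vertex, each $p\in S$ is a convex combination of points of $S\setminus\{p\}$. This fails at an extreme point of $\operatorname{conv}(S)$. It is the same convex-hull argument the paper uses to show $V(H)$ is infinite, so your version fuses two of the paper's steps into one.

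What your route buys: it never uses connectedness of $H$, the path to $H$, or the infinitude of $V(H)$. It shows the lemma holds for any function harmonic with respect to nonnegative coefficients satisfying the barycentric identity at every vertex. The only input you need from $H$ is that it contains an edge, so that $\sum_r c_{pr}>0$ and the $a_{pq}$ are well defined with total mass one on $V(H)$.

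The paper's route keeps the argument in the graph-theoretic language of propagating the maximum through the component $H$. This mirrors the propagation along positive-weight edges used later in the proof of Theorem~\ref{thm:main-general} to show $H\subset L$.
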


\begin{proof}
Given a component $H$ of $G_+$, let $\widetilde{u}: V\to\R_{>0}$ be the function defined as above in terms of $u|_V(H)$. Set $w=\widetilde{u}-A|_V$, and let $W=U-A$ be its piecewise affine interpolant. Then $w$ is $a$-discrete harmonic by equations \eqref{eq:affine}, \eqref{eq:harmonic}, and Lemma \ref{lem:affine-harmonic}. Suppose otherwise that $O$ is a bounded connected component of $\{W>0\}$, which implies that $S:=V\cap O$ is a finite set.

It is immediate that $S$ is nonempty since  $W$ can not achieve a strict local maximum in the interior of a face or an edge. Indeed, take a point $z\in O$ and a closed triangle $T$ containing it. Then some vertex $p$ of $T$ has $W(p)\ge W(z)>0$. Since both $U$ and $A$ are affine on $T$, the line segment $[z,p]$ lies in $\{W>0\}$. Thus $p$ belongs to the same component $O$.

Let $M=\max_{p\in S}w(p)>0$, and let $F=\{p\in S:w(p)=M\}$. For $p\in F$, we consider its neighbors in $V$.
For a neighbor $q$ with $w(q)>0$, $W$ is positive along the edge connecting $p$ and $q$, which indicates that $q$ belongs to $O$. Thus we have $w(q)\le M$. Every other neighbor has $w(q)\le0<M$. Hence
$$
 0=\Delta_a w(p)=\sum_{q:\{q,p\}\in E} a_{pq}\bigl(w(q)-M\bigr)
$$
is a sum of nonpositive terms. Therefore, every positive-weight neighbor of $p$ must have value $M$ and lie in $F$.

We claim that $F\cap H \neq \varnothing$. Indeed, pick a $p\in F$ and a finite edge path from $p_0=p, p_1, ..., p_k$ with $p_k \in H$ and  $\{p_i, p_{i+1} \} \in E$.  Without loss of generality, we may assume that $p_i \notin H$ unless $i=k$. By construction $c_{p_i p_{i+1}}>0$. By the argument above, $c_{p_0p_1}>0$ implies $p_1 \in F$. Inductively, we see that all $p_i$ are in $F$. Therefore $p_k \in F \cap H$.

By the connectedness of $H$, the finite nonempty set $F$ must contain all vertices of $H$, which contradicts that $V(H)$ is an infinite set.

Thus $\{U>A\}$ has no bounded component. Applying the same argument to $-u$ and $-A$ proves the same assertion for $\{U<A\}$.
\end{proof}

By the Adel'son--Vel'skii theorem \ref{thm:AV} 
\[
  U(z)=f(\nu\cdot z)
\]
for a unit vector $\nu$ and a continuous function $f:\mathbb{R}\to\mathbb{R}$.
Local finiteness makes $f$ locally piecewise affine. If $f$ is affine,
boundedness makes it constant, and the proof is complete.

Otherwise, choose a breakpoint $t_0$ with adjacent slopes $m_-\neq m_+$,
and set
\[
  L=\{z:\nu\cdot z=t_0\},
  \qquad
  A_0(z)=f(t_0)+\frac{m_-+m_+}{2}(\nu\cdot z-t_0).
\]
The line $L$ lies in the one-skeleton of $T$: a face interior cannot meet a
line where $U$ is not affine. Moreover, $L$ contains a vertex, since local
finiteness forces an edge to lie along $L$.

For $p\in V\cap L$, affineness of $U$ along each incident edge gives
\[
  \widetilde{u}(q)-A_0(q)
  =\frac{m_+-m_-}{2}\lvert\nu\cdot q-t_0\rvert
  \qquad\text{for every neighbor }q.
\]
Using \eqref{eq:averaging}, \eqref{eq:harmonic}, and
$\widetilde{u}(p)=A_0(p)$, we obtain
\[
  0=\sum_q a_{pq}\bigl(\widetilde{u}(q)-A_0(q)\bigr)
  =\frac{m_+-m_-}{2}\sum_q a_{pq}\lvert\nu\cdot q-t_0\rvert.
\]
Since $m_+\neq m_-$, every neighbor $q$ of $p$ with $a_{pq}>0$ lies on $L$. Such a vertex
$p$ cannot lie outside $H$. Indeed, if otherwise, let  $q_1, ..., q_k$ be the set of all neighboring vertices of $p$.  Then by the construction, $a_{pq_i}>0$. It follows that all $q_i \in L$. But this shows the vertex star of $p$ is collinear, which contradicts the triangulation assumption. 

Since $a_{pq}>0$ for every positive-$c$ edge issuing from a vertex of $H$, starting from a vertex of $L$ and following positive-weight edges in the
connected component $H$ therefore gives $H\subset L$. Consequently,
$u\equiv f(t_0)$ on $H$, i.e., $u$ is constant  on $H$. This finishes the proof of 
Theorem \ref{thm:main-general}.
\end{proof}

\begin{remark} Note that if $G_+$ is connected and the edge lengths of the triangulation $T$ are uniformly bounded, Theorem \ref{thm:main-general} can be strengthened to assert that every discrete harmonic function on $V$ with sublinear growth is constant.
Here a function with sublinear growth $f$ on $V$ means $\lim_{ |v| \to \infty} |f(v)|/|v| =0$. Indeed, in this case, the sublinear growth of $u$ implies that its affine interpolant $U$ on $\R^2$ also has sublinear growth. 
\end{remark}

\section{Recurrence and Transience of Delaunay triangulations}\label{sec:transient}

In this section, we prove Theorem \ref{tran}, which concerns recurrence for planar Delaunay triangulations. We show that if the diameters of Voronoi cells grow at most linearly in terms of the distances from their centers to the origin, then the 1-skeleton with cotangent weights is recurrent. Note that Delaunay triangulations of uniformly bounded edge lengths have Voronoi cells with uniformly bounded diameters, and hence satisfy the linear growth condition.  On the other hand, we construct a planar Delaunay triangulation whose cotangent-weighted network is transient.

Recall that an infinite network is recurrent if and only if the effective conductance (capacity) from a vertex to infinity is zero; equivalently, the effective resistance from that vertex to infinity is infinite. For a network with the conductance, i.e., the weight, $c$, define the Dirichlet energy of a function $f: V\to \R$ to be  
$$
\mathcal E(f)=\sum_{\{p,q\}\in E} c_{pq}\big(f(p)-f(q)\big)^2.
$$
The effective conductance from a vertex $a$ to infinity is

\begin{equation}
\label{conductance}
    C_{\mathrm{eff}}(a,\infty)
=
\inf\Big\{\mathcal E(f):\ f(a)=1,\ f \text{ has finite support}\Big\}.
\end{equation}

The effective resistance is its reciprocal:
$
R_{\mathrm{eff}}(a,b)
=
\frac{1}{C_{\mathrm{eff}}(a,b)}
$. Another equivalent formulation of the effective
resistance between vertices $a$ and $\infty$ is
\begin{equation}
\label{resistence}
    R_{\mathrm{eff}}(a,\infty)
=
\inf\Bigg\{
\sum_{e\in E}  \frac{i(e)^2}{c(e)}
:\ 
i \text{ is a unit flow from } a \text{ to } \infty
\Bigg\}.
\end{equation}
See \cite{doyle1984random} for a comprehensive treatment on this topic.

We first prove the following theorem.

\begin{recurrence}
    Suppose a Delaunay triangulation $T$ of the plane is such that the diameter of its Voronoi cells $C_v$ grows at most linearly in $|v|$ for vertices $v$ of $T$, namely, there exists $A>0$ such that $\diam(C_v)\leq A(1 + |v|)$.
    
    Then its 1-skeleton with the cotangent weights is recurrent.  Furthermore, there exists a Delaunay triangulation of the plane such that its 1-skeleton with the cotangent weights is transient.
 
\end{recurrence}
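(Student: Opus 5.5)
For the recurrence part, I will use the Dirichlet-energy formulation \eqref{conductance} and show $C_{\mathrm{eff}}(a,\infty)=0$ by exhibiting finitely supported test functions of arbitrarily small energy. The cotangent weights are exactly the ones for which the discrete energy of a vertex function equals the continuum Dirichlet energy of its piecewise affine interpolant: $\sum_{\{p,q\}} w_{pq}(f(p)-f(q))^2=\int_{\R^2}|\nabla F|^2$ for $F$ the interpolant on $T$. This is Duffin's finite-element identity, with the factor $\frac12$ in \eqref{eq:weights} accounting for the normalization. An alternative route is the Voronoi description: $w_{pq}=|e^*_{pq}|/|pq|$, where $e^*_{pq}$ is the Voronoi edge dual to $pq$, so the energy is $\sum (|e^*|/|e|)(f(p)-f(q))^2$.

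I will take the test functions to be restrictions to $V$ of the classical logarithmic cutoffs. Put $\phi_R(z)=1$ for $|z|\le R$, $\phi_R(z)=\log(R^2/|z|)/\log R$ for $R\le|z|\le R^2$, and $\phi_R=0$ beyond, and set $f=\phi_R|_V$.

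The key estimate bounds the energy using the dual edges. Since $|f(p)-f(q)|\le |pq|\sup_{[p,q]}|\nabla\phi_R|$, the energy is at most $\sum_e |e^*||e|\,\sup_{[p,q]}|\nabla\phi_R|^2$. Here $|e^*||e|$ is twice the area of the kite formed by $e$ and $e^*$. This convexity step needs care, because Delaunay dual edges may not cross their primal edges; I would bound the term by the area of $C_p\cup C_q$ times a constant instead. Summing over edges meeting a Voronoi cell, and using that the cells tile the plane, the energy is controlled by $\int|\nabla\phi_R|^2$ over an enlarged annulus. The main obstacle is that the sup of the gradient must be taken over a segment whose length is comparable to $\diam C_p$, which can be as large as $A(1+|p|)$. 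Since $|\nabla\phi_R(z)|\approx 1/(|z|\log R)$, a linear enlargement changes $|z|$ only by a bounded factor, say $|z|\in[|p|/(1+2A),(1+2A)|p|]$. Hence $\sup|\nabla\phi_R|^2$ over the segment is comparable to $|\nabla\phi_R(x)|^2$ for $x\in C_p$, provided the cell does not contain the origin, which excludes only finitely many cells. The constant factor also absorbs transition zones near $|z|=R$ and $|z|=R^2$. This yields energy $\lesssim \int_{R/c}^{cR^2}\frac{r\,dr}{r^2\log^2R}=O(1/\log R)\to 0$. The function $f$ has finite support by local finiteness and equals $1$ at the base vertex, so $C_{\mathrm{eff}}=0$.

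For transience I will construct an explicit example, with the flow formulation \eqref{resistence} as the tool. The plan is a Delaunay triangulation containing a "highway" along which resistance is summable. Place vertices on a ray at positions $x_n$ with gaps $|x_{n+1}-x_n|=\ell_n$, and flank them by vertices at very large distance $h_n\gg \ell_n$ on both sides, for instance on two parallel lines far away. The Voronoi edges dual to the ray edges then have length about $h_n$, so $w\approx h_n/\ell_n$. Choosing $h_n/\ell_n=n^2$ gives $\sum 1/w_n<\infty$, and the unit flow along the ray has finite energy, so the network is transient. The hard part is making this a genuine Delaunay triangulation of the whole plane with empty circumcircles. I would use a lattice of such slabs with geometrically growing scales, or equivalently thin collinear-ish chains of vertices separated by wide empty strips, and check the empty circle condition locally. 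Consistently with the first part, this requires Voronoi cells of superlinear diameter.
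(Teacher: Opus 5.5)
\textbf{Transience.} The main gap is the second half of the statement. It is an existence claim, and your proposal never produces the triangulation. It gives a heuristic and postpones exactly the step that carries the content. That step cannot be ``checked locally'': the circumdisks of the triangles on the highway edges have radius comparable to $h_n$, so their emptiness is a global constraint.

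The parameters you suggest cannot be realized as stated.
\begin{itemize}
\item On two fixed parallel lines $h_n$ is bounded. By your own estimate $w_n\approx h_n/\ell_n$, the condition $\sum 1/w_n<\infty$ then forces $\sum\ell_n<\infty$, so the ray vertices accumulate and local finiteness fails.
\item Take unit gaps and $h_n/\ell_n=n^2$. For infinitely many $n$, the empty circumdisk on one side of $[(n,0),(n+1,0)]$ (say above) has center $(n+\frac12,b_n)$ with $b_n\ge cn^2$. A point $(x,y)$ lies in this disk iff $(x-n-\frac12)^2+y^2-2yb_n<\frac14$, and this holds for all large such $n$ once $y>1/(2c)$. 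So the half-plane $\{y>1/(2c)\}$ contains no vertex, which is impossible for a triangulation of $\R^2$.
\end{itemize}
What is missing is an explicit vertex set whose Delaunay triangulation can actually be verified. The paper uses $\{(\pm4^n,\pm2^n)\}\cup\{O\}$. Consecutive rectangles are joined by isosceles, hence cyclic, trapezoids. Along $OA_0A_1\cdots$ the opposite angles are computed exactly, giving $w_{A_kA_{k+1}}=\frac12(5\cdot2^k+2^{-k})\ge 2^k$ and hence a unit flow of finite energy.

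\textbf{Recurrence.} Your argument has the same architecture as the paper's: a logarithmic cutoff, $w_{pq}=|e^*|/|pq|$, and comparison with Voronoi areas. Two steps fail as written.

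First, you bound each edge term by a constant times $\area(C_p\cup C_q)$ and then sum. This counts $C_p$ once for every edge at $p$. Vertex degrees can be unbounded, so ``the cells tile the plane'' gives no bounded overlap. The correct bookkeeping, which also settles your kite worry, is this: $\operatorname{conv}(p,e^*)$ has base $|e^*|$ and height $|pq|/2$ whether or not $e^*$ crosses $pq$, and these triangles tile the convex cell $C_p$. That is the paper's identity $\sum_q w_{pq}|p-q|^2=4\area(C_p)$.

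Second, $|pq|\le 2A(1+|p|)$ does not give $|z|\ge |p|/(1+2A)$ on $[p,q]$. The triangle inequality yields only $|z|\ge |p|-2A(1+|p|)$, which is vacuous once $A\ge\frac12$. If a long edge passed near the origin, $\sup_{[p,q]}|\nabla\phi_R|$ could be of order $1/(R\log R)\gg 1/(|p|\log R)$. (A lower bound of this type does hold for Delaunay edges, but proving it needs the empty-circle property, not the length bound.) The paper avoids the issue. Since the cutoff is radial, it applies the mean value theorem to the radial profile between $|p|$ and $|q|$, which evaluates the derivative at radius at least $\min(|p|,|q|)$. Charging each edge to its endpoint of smaller modulus, only the upper bound $1+|x|\le K(1+|p|)$ for $x\in C_p$ is needed, and the hypothesis gives that directly.
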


It has been shown in \cite{he-rigidity, he1995hyperbolic, GeYuZhou2025, GGS} that a locally finite circle packing in the plane with the weight from dual equations in Figure~\ref{fig:dual-weights} and under certain conditions on intersection angles of circles is always recurrent. As we will see below, the recurrence property does not hold for the general cotangent weighted Delaunay triangulation.

\begin{proof}
    Since the growth hypothesis is unchanged by translation, with a possibly different $A$, we may choose a vertex $o$ as the origin.
    By equation \eqref{conductance}, it suffices to find a sequence of finitely supported functions $f_n$ with $f_n(o)=1$, such that
    \begin{equation}
        \mathcal {E}(f_n):=\sum_{\{u,v\}\in E}w_{uv}\bigl(f_n(u)-f_n(v)\bigr)^2\to 0,
    \end{equation}
    as $n\to\infty$. Here each edge in this sum is counted once.

    We first compute the area of Voronoi cells in terms of the distance between vertices.
    Let $K=1+A$. For every $v\in V$, since $v\in C_v$, we have for all $x\in C_v$,
    \begin{equation}\label{eq:radii}
        1+|x|\le 1+(|v|+\diam(C_v))\le K(1+|v|).
    \end{equation}
    Let $s_{uv}$ be the length of the common side $uv$ of cells $C_u$ and $C_v$. Then the cotangent weight $w_{uv}$ is given by (see See Figure \ref{fig:dual-weights}(b)), 
    \begin{equation}
        w_{uv}=\frac{s_{uv}}{|u-v|}.
    \end{equation}
    Indeed, the signed distances of the two adjacent circumcenters from the midpoint of $uv$, measured toward their respective opposite vertices, are $\frac12|u-v|\cot\alpha_{uv}$ and $\frac12|u-v|\cot\beta_{uv}$. Hence their sum is $s_{uv}$.

    Drop a perpendiculars from $v$ to each side of $C_v$. The triangle over the side shared with $C_u$ has base $s_{uv}$ and height $|u-v|/2$; see Figure~\ref{fig:voronoi-triangle-area}. Summing its area over the sides gives
    \begin{equation}\label{eq:area}
        \area(C_v)=\sum_{\{u,v\}\in E}\frac{s_{uv}|u-v|}{4}=\frac14\sum_{\{u,v\}\in E}w_{uv}|u-v|^2.
    \end{equation}

\begin{figure}[htbp]
\centering
\begin{tikzpicture}[scale=1.3]
 \coordinate (v) at (0,0);
 \coordinate (u) at (2.7,0);
 \coordinate (a) at (1.35,1.05);
 \coordinate (b) at (1.35,-1.1);

 % Triangle shading
 \fill[blue!12] (v)--(a)--(b)--cycle;

 % Boundary of C_v, without fill
 \draw[black,thin]
   (-1.1,-0.68)--(-0.68,1.27)--(0.42,1.53)
   --(a)--(b)--(0.16,-1.45)--cycle;

 \draw[black!60,thin] (v)--(a) (v)--(b);
 \draw[red,thick] (a)--(b);
 \draw[dashed,black!60] (v)--(u);

 % Right-angle marker
 \draw[black,thin]
   (1.19,0)--(1.19,0.16)--(1.35,0.16);

 \fill[black] (v) circle (1.6pt) (u) circle (1.6pt);

 % Labels
 \node[above left,inner sep=3pt] at (v) {$v$};
 \node[above,inner sep=3pt] at (u) {$u$};
 \node at (-0.45,0.95) {$C_v$};
 \node[right=4pt,red] at (1.35,0.68) {$s_{uv}$};
 \node[below=2pt,black!70,inner sep=0pt,font=\footnotesize]
   at (0.82,0) {$|u-v|/2$};
\end{tikzpicture}
\caption{The shaded triangle has area
$\area=\frac{s_{uv}|u-v|}{4}
=\frac{w_{uv}|u-v|^2}{4}$.}
\label{fig:voronoi-triangle-area}
\end{figure}

Now for integer $n>2$, we define
\begin{equation}\label{eq:cutoff}
    f_n(v)=\left(1-\frac{\log(1+|v|)}{\log n}\right)_+.
\end{equation}
Here $t_+:=\max\{t,0\}$. Then $f_n(o)=1$ and $f_n(v)=0$ when $|v|\ge n-1$, so $f_n$ is finitely supported by local finiteness of $T$. If $|v|\leq |u|$, the mean value theorem gives
\begin{equation}\label{eq:gradient}
    |f_n(u)-f_n(v)|\le\frac{|u-v|}{(1+|v|)\log n},
\end{equation}
and hence
\begin{equation}
    w_{uv}\bigl(f_n(u)-f_n(v)\bigr)^2\leq \frac{w_{uv}|u-v|^2}{(1+|v|)^2(\log n)^2}.
\end{equation}
 By \eqref{eq:area}, assign each unordered
edge to one endpoint of smaller radius, then we have
\begin{equation}\label{eq:energy}
\begin{aligned}
    \mathcal E(f_n)&\le
    \sum_{|v|<n-1} \sum_{\{u,v\}\in E}w_{uv}\bigl(f_n(u)-f_n(v)\bigr)^2\\
    &\leq
    \frac{1}{(\log n)^2}\sum_{|v|<n-1}
    \frac{\sum_{\{u,v\}\in E}w_{uv}|u-v|^2}{(1+|v|)^2}
    =
    \frac{4}{(\log n)^2}\sum_{|v|<n-1}
    \frac{\area(C_v)}{(1+|v|)^2}.
\end{aligned}
\end{equation}
For $x\in C_v$, \eqref{eq:radii} gives
\begin{equation}
    \frac{\area(C_v)}{(1+|v|)^2}  = \int_{C_v}\frac{dA}{1+ |v|^2}\le K^2\int_{C_v}\frac{dA}{(1+|x|)^2}.
\end{equation}
Note that the cells in \eqref{eq:energy} have disjoint interiors. Moreover, they all lie in
$B(0,Kn)$ because $|v|<n-1$ and
$1+|x|\le K(1+|v|)<Kn$ when $x\in C_v$. Consequently,
 $$\mathcal E(f_n)
 \le \frac{4K^2}{(\log n)^2}
       \int_{B(0,Kn)}\frac{dA}{(1+|x|)^2}
 =\frac{8\pi K^2}{(\log n)^2}
       \int_0^{Kn}\frac{r\,dr}{(1+r)^2}
 \le \frac{8\pi K^2\log(1+Kn)}{(\log n)^2}
 \to 0$$
as $n\to\infty$.
This proves the recurrence.

Finally, we now construct a planar Delaunay triangulation using a sequence of nested rectangles whose cotangent-weighted network is transient, in contrast to the above recurrent case.

Consider the following four sets of points $A_n = \{(4^n, 2^n)\}$, $B_n = \{(4^n, -2^n)\}$,  $C_n = \{(-4^n, -2^n)\}$, and $D_n = \{(-4^n, 2^n)\}$ for $n = 0, 1, \cdots$ together with the origin $O = (0, 0)$. (They lie on the curves $(\pm x, \pm \sqrt{|x|})$.) Let $V = \{A_n, B_n, C_n, D_n, O\}$. Then it is straightforward to check that the Delaunay triangulation $T$ determined $V$ is given by the following picture. Notice that its Delaunay tesselation consists of only four triangles, where other cells are trapezoids. Between consecutive rectangles there are four isosceles, hence cyclic, trapezoids, and their dashed diagonals have cotangent weight $0$.
\input{fig_transient}

To show that the network $(T, w)$ with the cotangent weight $w$ on $T$ is transient, we only need to show that the effective resistance from $O$ to the infinity is a finite number. 
By equation \eqref{resistence},  the network $(T, w)$ is transient if  there exists an infinite edge path $v_0v_1, v_1v_2, ..., v_{n}v_{n+1}, ...$ starting from a vertex $v_0$ and escaping all compact sets such that $\sum_{n} \frac{1}{w_{v_nv_{n+1}}} $ is finite by taking the unit flow $i(v_kv_{k+1}) = 1$ for $k = 0, 1, \cdots$.

Therefore, we only need to show that the effective resistance in series of edges $OA_0$ and $A_kA_{k+1}$ for $k = 0, 1, 2, \cdots$ is a finite number. The weight on $OA_0$ is $w_{01} = 1/2(\cot (\pi/4) +\cot (\pi/4)) = 1$ since the middle rectangle is a square. The cotangent weight $w_{k(k+1)}$ on $A_kA_{k+1}$ can be computed by 
$$w_{k(k+1)} = \frac{1}{2}(\cot \angle A_kB_{k+1}A_{k+1} + \cot \angle A_kD_{k+1}A_{k+1})$$
with the triangulation shown in the picture. To compute $\cot \angle A_kB_{k+1}A_{k+1}$, consider the vectors $u_k = \overrightarrow{B_{k+1}A_k}$ and $v_k = \overrightarrow{B_{k+1}A_{k+1}}$ in the $xy$-plane in  $\R^3$, then 
$$\cos \angle A_kB_{k+1}A_{k+1} = \frac{u_k\cdot v_k}{|u_k||v_k|}, \quad \sin \angle A_kB_{k+1}A_{k+1} = \frac{|u_k\times v_k|}{|u_k||v_k|} = \frac{|\det (u_k ,v_k)|}{|u_k||v_k|},$$ 
so
$$\cot \angle A_kB_{k+1}A_{k+1} = \frac{u_k\cdot v_k}{|\det (u_k ,v_k)|}.$$
Then restricted to $\R^2$, $u_k = 3(-4^{k}, 2^{k})$ and $v_k = (0, {2^{k+2}})$. Therefore, 
$$\cot \angle A_kB_{k+1}A_{k+1} = \frac{1}{2^{k}}.$$
Similarly, define the vectors $f_k = \overrightarrow{D_{k+1}A_k} = (5\cdot 4^k, -2^{k})$ and $g_k = \overrightarrow{D_{k+1}A_{k+1}} = (2\cdot 4^{k+1},0)$ in $\R^3$, we have 
$$\cot \angle A_kD_{k+1}A_{k+1} =  \frac{f_k\cdot g_k}{|\det (f_k ,g_k)|} = 5\cdot 2^{k}.$$
Therefore, $w(O, A_0) = 1$ and for $k\geq 0$, we have 
$$w_{k(k+1)} = \frac{1}{2}(5 \cdot 2^k + \frac{1}{{2^{k}}}) \geq 2^k.$$
Then the effective resistance in the path from $O$ along $A_k$ to the infinity is bounded by 
$$ R_{\mathrm{eff}}(O,\infty) \leq  \frac{1}{w(O, A_0)} + \sum_{k=0}^{\infty}\frac{1}{w_{k(k+1)}} \leq  1 + \sum_{k=0}^{\infty}\frac{1}{2^k}<\infty.$$
Therefore, the network $(\mathcal{T}, w)$ is transient. 
\end{proof}

% \newpage
\section{Consequences of Theorem \ref{thm:main-general} in discrete conformal geometry}\label{sec:consequences}
 Thurston introduced circle packings \cite{thurstons-notes} as a discretization scheme of 2-dimensional conformal structure in 1978. Since then, various discrete conformal structures of polyhedral surfaces have been proposed \cite{BowersSt, zhou2017circle, luo}. Glickenstein~\cite{glickenstein2011} and Glickenstein--Thomas~\cite{glickenstein-thomas2017} first introduced the duality structure of discrete metrics so as to unify all previous discrete conformal schemes of Euclidean polyhedral surfaces into a continuous family. Zhang--Guo--Zeng--Luo--Yau--Gu~\cite{zhang2014} then provides the geometric interpretation of these general discrete conformal structures in convex geometry of hyperbolic 3-space. See also Xu--Zheng~\cite{xu-zheng2024} for the classification of discrete conformal structures with different spherical and hyperbolic background metrics. In this section, we briefly recall the duality of discrete conformal structures in \cite{glickenstein-thomas2017}, which implies that the reciprocal dual equation holds for all discrete conformal structures.

Let  $(S,T)$ denote a connected oriented triangulated surface $S$
with a triangulation $T$. Recall that $V,E,F$ be the set of vertices, edges, and faces of $T$ respectively. 
For simplicity, we write $i\in V$ for a vertex, $ij\in E$ for an edge joining vertices $i$ and $j$, and $ijk\in F$ for a triangle with vertices $i$, $j$, and $k$.
For functions defined on $V$, $E$, and $F$, respectively, we use the shorthand notation $f_i=f(i)$, $f_{ij}=f(\{i,j\})$, and $f_{ijk}=f(\{i,j,k\})$.

\begin{definition}
   A Euclidean polyhedral surface $(S,T,l)$ 
is a triangulated surface $(S,T)$ with a map
$l:E\to\mathbb{R}_{>0}$ satisfying the strict triangle inequalities for any face $\{i,j,k\}\in F$.
% , $l_{ij}< l_{ik} + l_{jk}.$
\end{definition}
The function $l$ is a discrete metric on $(S, T)$. A discrete conformal structure consists of a set of functions on vertices determining a set of discrete metrics as follows. 
\begin{definition}[\cite{glickenstein-thomas2017}]
Let  $\varepsilon:V\to\{-1,0,1\}$, $\eta:E\to\mathbb{R}$ be two
weights defined on the vertices and edges respectively of $(S, T)$. A Euclidean discrete
conformal structure on the weighted triangulated surface
$(S,T ,\varepsilon,\eta)$ is the set of maps $r:V \to \R_{>0}$  such that the edge length function $l:E\to \mathbb{R}_{>0}$ is a discrete metric satisfying
\begin{equation}
l_{ij}^2=\varepsilon_i r_i^2+ \varepsilon_j r_j^2+2\eta_{ij}r_ir_j,
\end{equation}
\end{definition}
This definition contains the previous discrete conformal structures as special cases. The following table from \cite{zhang2014} and \cite{xu2021rigidity} shows their correspondence.  
\begin{table}[htbp]
\centering
\renewcommand{\arraystretch}{1.15}
\resizebox{0.8\linewidth}{!}{%
\begin{tabular}{|lccc|}
\hline
Scheme
& $\varepsilon_i$
& $\varepsilon_j$
& $\eta_{ij}$ \\
\hline
Tangential circle packings
& $+1$ & $+1$ & $+1$ \\

Circle packings with intersection angles
& $+1$ & $+1$ & $(-1,1]$ \\

Inversive distance circle packings
& $+1$ & $+1$ & $(-1,+\infty)$ \\

Vertex scalings
& $0$ & $0$ & $(0,+\infty)$ \\

General discrete conformal structures
& $\{+1,0,-1\}$ & $\{+1,0,-1\}$ & $(-1,+\infty)$ \\
\hline
\end{tabular}%
}
\end{table}

For simplicity, we consider geometric triangulations $T$ in the plane whose discrete metric $l_{ij} = |p_i - p_j|$ is determined by some discrete conformal structure $(\R^2, T, \epsilon, \eta)$, where $p:V\to \R^2$ denote the position of vertices in $T$. It determines a natural reciprocal dual structure by the power distances to the (virtual) circles assigned to vertices as follows. For each face $\{i,j,k\}\in F$ of $T$, define a point $z_{ijk}\in \R^2$ satisfying 
$$|z_{ijk} - p_i|^2 - \epsilon_ir_i^2 = |z_{ijk} - p_j|^2 - \epsilon_jr_j^2  = |z_{ijk} - p_k|^2 - \epsilon_kr_k^2.$$
It is straightforward to check that the solution exists since the triangle is not degenerate. Geometrically, the weight $(\epsilon, \eta)$ determines the type of circles at vertices with $\epsilon$ and relations between circles with $\eta$. In particular, if $\epsilon_i=1$, the vertex $i$ is assigned with a circle with radius $r_i$; if $\epsilon_i=0$, the vertex $i$ is assigned with a point, or a degenerate circle; if $\epsilon_i=-1$, the vertex $i$ is assigned with a circle with imaginary radius $r_i$. The weight $\eta_{ij}$ records the inversive distance between the circles assigned to $i$ and $j$ when $\epsilon_i = \epsilon_j = 1$, namely, 
$$\eta_{ij} = \frac{l_{ij}^2 - r_i^2 - r_j^2 }{2r_ir_j}.$$
Meanwhile, point $z_{ijk}$ defines a reciprocal dual structure to $p_i$. Indeed, one can show that $z_{ijk} - z_{ijm}$ is perpendicular to $p_i - p_j$ as vectors in $\R^2$. The dual structure defines a weight $c_{ij}$ as in equation \ref{eq:dual}
\begin{equation}\label{eq:dual2} z_{ijk} -z_{ijm} = c_{ij}J (p_i-p_j),
\end{equation}
where $z_{ijk}$ (resp. $z_{ijm}$) is the point associated with the triangle to the left (resp. right) of the direct edge from $p_j$ to $p_i$. 
See the important work of Glickenstein~\cite{glickenstein2011} for more details on the duality structure. One can derive that $c_{ij}\geq 0$ when the triangulation $T$ is a weighted Delaunay triangulation and the corresponding $G_+$ is connected. See \cite{xu2021rigidity, bobenko2024decorated} for the definition. 

\begin{figure}[h!]
\centering
\input{mix1.tikz}
\caption{(a) Dual center of a weighted triangle; (b) Dual structure across two adjacent triangles.}
\end{figure}

 Theorem \ref{thm:main-general} implies that a generalization of Corollary \ref{thm:main}
\begin{corollary} \label{cor:2}
Let $T$ be a locally finite weighted Delaunay triangulation of the plane such that its discrete metric $l_{ij} = |p_i-p_j|$ is determined by a function $r: V\to \R_{>0}$ and a discrete conformal structure $(\R^2, T, \epsilon, \eta)$. Let $G_+=(V(T), E_+)$ with $E_+=\{ \{p,q\} \in E: c_{pq}>0\}$ be the subgraph of the 1-skeleton of $T$ with positive edge weights. Then every bounded discrete harmonic function is constant in each component of $G_+$ with respect to the weight $c_{ij}$ defined from the dual structure.

\end{corollary}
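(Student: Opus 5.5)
The plan is to reduce Corollary~\ref{cor:2} directly to Theorem~\ref{thm:main-general}. The one hypothesis to check is the equilibrium condition \eqref{eq:es} for the weights $c_{ij}$ defined by \eqref{eq:dual2}, together with $c_{ij}\ge 0$. Nonnegativity we take from the weighted Delaunay assumption, as indicated before the statement (see \cite{xu2021rigidity, bobenko2024decorated}). Once \eqref{eq:es} holds, Theorem~\ref{thm:main-general} applied to $T$ with the weight $c$ yields the Liouville property on every component of $G_+$, which is exactly the assertion.

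First, I will justify that the dual points $z_{ijk}$ are well defined and satisfy the perpendicularity that makes \eqref{eq:dual2} meaningful.

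\begin{itemize}
\item The defining equations $|z-p_i|^2-\epsilon_i r_i^2=|z-p_j|^2-\epsilon_j r_j^2=|z-p_k|^2-\epsilon_k r_k^2$ reduce, after subtracting pairwise, to two linear equations $2\langle z,p_j-p_i\rangle=|p_j|^2-|p_i|^2-\epsilon_j r_j^2+\epsilon_i r_i^2$ (and similarly for $k$). These have a unique solution because $p_j-p_i$ and $p_k-p_i$ are linearly independent for a nondegenerate triangle.
\item For the two triangles $ijk$ and $ijm$ sharing the edge $ij$, both $z_{ijk}$ and $z_{ijm}$ satisfy the same linear equation $2\langle z,p_j-p_i\rangle=\text{const}$. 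They therefore lie on a common line perpendicular to $p_i-p_j$, the radical axis of the two vertex circles.
\item Hence $z_{ijk}-z_{ijm}$ is a real multiple of $J(p_i-p_j)$. This defines $c_{ij}$, and $c_{ij}$ is symmetric because reversing the orientation of the edge swaps left and right and negates $p_i-p_j$ simultaneously.
\end{itemize}

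Next, I will derive \eqref{eq:es} at each vertex $p_i$ by summing \eqref{eq:dual2} over the edges $ij$ around $i$, exactly as remarked in the introduction. Orient each edge from $p_j$ toward $p_i$ and list the neighbors $j_1,\dots,j_d$ of $i$ in cyclic order. The faces to the left and right of consecutive edges coincide, so the sum $\sum_{s}(z_{L_s}-z_{R_s})$ telescopes around the vertex star to $0$. By local finiteness the star is a finite closed cycle of triangles, so this step is legitimate. This gives $J\sum_j c_{ij}(p_i-p_j)=0$, and since $J$ is invertible, $\sum_j c_{ij}(p_i-p_j)=0$, which is \eqref{eq:es}.

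The main obstacle is the sign bookkeeping in the telescoping: one must check that with the stated orientation convention (left face $ijk$, right face $ijm$, edge from $p_j$ to $p_i$) each triangle appears once with $+$ and once with $-$. It would be prudent to verify this against the cotangent case, where $z_{ijk}$ is the circumcenter ($\epsilon\equiv 0$) and the sign is known. The remaining ingredient, $c_{ij}\ge 0$, is imported from the weighted Delaunay literature. With both hypotheses in place, Theorem~\ref{thm:main-general} finishes the proof.
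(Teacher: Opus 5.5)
Your proposal is correct and follows the paper's route. You sum the reciprocal dual equation \eqref{eq:dual2} around each vertex star to obtain the equilibrium condition \eqref{eq:es}, take $c_{ij}\ge 0$ from the weighted Delaunay literature exactly as the paper does, and then apply Theorem~\ref{thm:main-general}. Your checks that $z_{ijk}$ exists and that $z_{ijk}-z_{ijm}\perp p_i-p_j$, and your telescoping argument, supply details the paper only calls straightforward. The sign bookkeeping you flag is automatic: each face appears once with each sign under any consistent orientation, and a global sign flip would not affect \eqref{eq:es}.
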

Notice that it is not clear whether a triangulation of the plane with an arbitrary prescribed weight exits, which begs a discrete uniformization theorem for general discrete conformal scheme. In some special cases of circle packings with intersection angles under specific conditions, a discrete uniformization theorem has been established by \cite{he-rigidity, GeYuZhou2025, zhao2026weyl} for example.

\begin{remark}
 Theorem~\ref{tran} can be extended to the general discrete conformal schemes discussed above under the additional assumption that the vertices $v$'s are contained in the corresponding power diagrams and the diameters of the power diagrams grow at most linearly in terms of $|v|$. The proof follows the same idea as that of Theorem~\ref{tran} and is therefore omitted.
\end{remark}

\appendix

% \newpage
\section{Proof of Theorem~\ref{thm:AV}}\label{sec:av-proof}

In this appendix, we prove Theorem~\ref{thm:AV}

\theoremstyle{plain}
\newtheorem*{AV}{Theorem~\ref{thm:AV}}
\begin{AV}[Adel'son--Vel'skii]
Let $U:\R^2\to\R$ be continuous and have sublinear growth. Suppose that for every affine function $A:\R^2\to\R$, every connected component of $\{z\in\R^2:U(z)>A(z)\}$ and of $\{z\in\R^2:U(z)<A(z)\}$ is unbounded. Then there is a unit vector $a\in\R^2$ and a continuous function $f:\R\to\R$ such that $U(z)=f(a\cdot z)$ for all $z\in\R^2$.
\end{AV}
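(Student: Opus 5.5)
The plan is to prove that every level set of $U$ is a union of straight lines, and then that all these lines are parallel. The one analytic input beyond the hypothesis is the following consequence of sublinear growth. If $A(z)=c+\langle g,z\rangle$ with $g\neq 0$, then $U(z)-A(z)=-\langle g,z\rangle+o(|z|)$. Hence, for every $\delta>0$, outside a large ball the set $\{U>A\}$ lies in the sector $\{\langle g,z\rangle<\delta|g||z|\}$ and $\{U<A\}$ lies in $\{\langle g,z\rangle>-\delta|g||z|\}$. So an unbounded component of $\{U>A\}$ must escape to infinity essentially inside the closed half-plane $\{\langle g,z\rangle\le 0\}$. We will tilt affine functions slightly, by a small slope $t$, and play the unboundedness hypothesis against these asymptotic half-plane constraints.

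\textbf{Step 1 (key claim).} If $U(z_1)=U(z_2)=c$ with $z_1\neq z_2$, then $U\equiv c$ on the line $L$ through $z_1,z_2$.

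Let $\ell$ be an affine function vanishing on $L$. First suppose some point $z_3\in L$ has $U(z_3)>c$; say $z_3$ lies outside the segment $[z_1,z_2]$, the case inside being analogous. Take $A=c+\eta+t\ell$ with $0<\eta<U(z_3)-c$, so that $z_1,z_2\in\{U<A\}$ while $z_3\in\{U>A\}$.

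For $t>0$ the component $O_+$ of $\{U>A\}$ containing $z_3$ must run off to infinity into $\{\ell\le 0\}$; for $t<0$ the analogous component $O_-$ runs into $\{\ell\ge0\}$. In the same way, the components of $\{U<A\}$ containing $z_1$ and $z_2$ run off into the opposite half-planes. Letting $t\to 0^{\pm}$ and $\eta\to 0$ with these points held fixed, I expect a planar separation (Jordan curve) argument to produce a bounded component of $\{U>A\}$ or of $\{U<A\}$ for suitable $t$. The mechanism: the $\{U<A\}$-paths from $z_1$ and $z_2$ to infinity, together with a piece of $L$, enclose $z_3$ inside a region in which its $\{U>A\}$-component is trapped.

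Points of $L$ with $U<c$ are handled symmetrically, using $\{U<A\}$. \textbf{This step is the main obstacle.} The delicate point is making the trapping rigorous when the components only escape asymptotically in a sector rather than exactly in a half-plane. My first attempt will fix the small $\delta$ in the sector estimate and use the disjointness of components of $\{U>A\}$ and $\{U<A\}$ in a large disk, together with the fact that the escape directions must cross $L$ to reach their prescribed sides.

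\textbf{Step 2 (every level set through a point contains a line).} Fix $z_0$ and set $c=U(z_0)$. If $U$ is not constant, apply the hypothesis with the constant affine function $A\equiv c$: both $\{U>c\}$ and $\{U<c\}$ are nonempty with unbounded components. If $z_0$ is in the interior of $\{U=c\}$, any nearby point serves as $z_2$. Otherwise $z_0$ lies on the boundary of one of these sets, and connectedness combined with the intermediate value theorem along a small circle around $z_0$ gives another point $z_2\neq z_0$ with $U(z_2)=c$. Step 1 then puts a whole line through $z_0$ inside $\{U=c\}$.

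\textbf{Step 3 (the lines are parallel).} Two lines lying in level sets with different values cannot meet, so they are parallel. If two lines through points of the same level set were not parallel, every line through a point of some other level set would cross one of them, a contradiction, unless $U$ is constant. Hence, if $U$ is nonconstant, all level lines share a common direction $a^{\perp}$ with $a$ a unit vector. Then $U(z)$ depends only on $a\cdot z$, so $U(z)=f(a\cdot z)$, where $f(s)=U(sa)$ is continuous. If $U$ is constant, any $a$ works.
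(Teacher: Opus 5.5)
Your Step 1 is not merely unproved; it is false, so no separation argument can establish it. Take $U(x,y)=\sin x$. It is bounded, hence of sublinear growth, and it satisfies the hypothesis. For $A=\alpha x+\beta y+\gamma$ with $\beta\neq 0$, the sets $\{U>A\}$ and $\{U<A\}$ are the two sides of the graph of the continuous function $x\mapsto(\sin x-\alpha x-\gamma)/\beta$, hence connected and unbounded. For $\beta=0$ their components are strips $I\times\R$. In fact every continuous $f(a\cdot z)$ satisfies the hypothesis.

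The points $z_1=(0,0)$ and $z_2=(\pi,0)$ have $U(z_1)=U(z_2)=0$, yet $z_3=(\pi/2,0)$ has $U(z_3)=1$. For the outside case, use $U=\sqrt{|x|}$ with $z_{1,2}=(\pm1,0)$ and $z_3=(4,0)$. Two points of one level set can lie on two different parallel level lines, and then the line joining them is not a level line. In these examples, for every $A=c+\eta+ty$ with $t\neq0$, both $\{U>A\}$ and $\{U<A\}$ are connected. The components through $z_1,z_2$ and through $z_3$ simply escape in opposite vertical directions and trap nothing.

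Step 2 has an independent defect of the same kind. The point $z_2$ produced by the intermediate value theorem on a small circle need not lie on the level line through $z_0$. For instance, take $U=f(x)$ with $f(x)=x\sin(1/x)$ near $0$ and $z_0=(0,0)$; the level lines $x=1/(n\pi)$ accumulate at the line $x=0$ through $z_0$.

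What is missing is a mechanism that produces a level line at all. Your Step 3 would correctly finish once every point is known to lie on a level line, but that statement is essentially the theorem itself. The paper obtains it in five steps:
\begin{enumerate}
\item Hypothesis (P) gives a maximum principle for $U-A$ on bounded open sets (Lemma~\ref{lem:comparison}).
\item A winding-number argument with the tilted family $A_t=\alpha x+ty+c$ shows that a nonconstant $U$ has some $\{U>A\}$ with $\nabla A\neq0$ disconnected (Lemma~\ref{lem:disconnected}).
\item Your sublinear cone estimate, which is correct and is the paper's Lemma~\ref{lem:sublinear-property}, then forces one component to be thin, eventually inside every cone $|x|<c|y|$ (Corollary~\ref{cor:component}).
\item On that component, the slice supremum $h(t)$ of $U-A$ and its rightmost maximizer $m(t)$ are convex by the maximum principle and sublinear, hence constant. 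This yields a vertical level line (Lemma~\ref{lem:level-line}).
\item The closed set of positions of vertical level lines is shown to be all of $\R$ by rerunning Lemma~\ref{lem:level-line} in a complementary strip or half-plane.
\end{enumerate}
You need some replacement for this existence argument; a two-point collinearity claim cannot serve.
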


%\newpage
For simplicity, we denote by (P) the hypothesis that for every affine function $A:\R^2\to\R$, every connected component of $\{z\in\R^2:U(z)>A(z)\}$ and of $\{z\in\R^2:U(z)<A(z)\}$ is unbounded. 
Note that if $U$ satisfies (P) (or has sublinear growth), then so do $-U$ and $U\circ\Phi$ for every rigid motion $\Phi$ of $\R^2$.

We prove the theorem with the following lemmas.
The first lemma shows that the hypothesis (P) yields the following maximum principle.

\begin{lemma}\label{lem:comparison}
    Suppose $U$ satisfies $\mathrm{(P)}$. Let $A$ be affine and $D\subset\R^2$ a nonempty bounded open set. Then 
    \begin{equation}
    \begin{aligned}
        \sup_D (U-A) &\leq \max_{\partial D} (U-A),\\
        \inf_D (U-A) &\geq \min_{\partial D} (U-A).
    \end{aligned}
    \end{equation}
\end{lemma}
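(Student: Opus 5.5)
We argue by contradiction, using only the hypothesis (P) and the continuity of $U$; sublinear growth is not needed here. By the symmetry remark before the lemma, replacing $U$ by $-U$ and $A$ by $-A$ turns the second inequality into the first, so it suffices to prove the first. Since $D$ is bounded, $\overline D$ is compact and $\partial D$ is compact and nonempty: if $\partial D$ were empty, $D$ would be clopen in $\R^2$, hence equal to $\R^2$, which is not bounded. So $m:=\max_{\partial D}(U-A)$ exists, and $\sup_D(U-A)$ is finite, attained on $\overline D$.

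Suppose $\sup_D(U-A)>m$. Choose a point $z_0\in D$ with $(U-A)(z_0)>m$, and pick $\varepsilon>0$ so small that $(U-A)(z_0)>m+\varepsilon$. Set $A'=A+m+\varepsilon$, which is again affine. Then $z_0\in\{U>A'\}$, while $U-A'\le -\varepsilon<0$ on $\partial D$.

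Let $O$ be the connected component of the open set $\{U>A'\}$ containing $z_0$. The key step is to show that $O\subset D$. Indeed, $O$ is connected and does not meet $\partial D$. Hence $O=(O\cap D)\cup(O\cap(\R^2\setminus\overline D))$ is a disjoint union of two relatively open sets. The first of these is nonempty because it contains $z_0$, so by connectedness $O\subset D$. Therefore $O$ is a bounded connected component of $\{U>A'\}$, which contradicts (P).

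I expect no real obstacle. The only points needing care are the topological ones: that $\partial D\neq\varnothing$, so the maximum makes sense, and that a connected set missing $\partial D$ and meeting $D$ lies in $D$. The small shift by $\varepsilon$ is what makes $\partial D$ lie strictly outside the superlevel set.
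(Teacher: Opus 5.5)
Your proof is correct and follows the paper's argument essentially verbatim: shift $A$ up by a constant strictly between $\max_{\partial D}(U-A)$ and $(U-A)(z_0)$, observe that the component of the new superlevel set through $z_0$ misses $\partial D$ and so lies in the bounded set $D$, contradicting (P), and obtain the second inequality by passing to $-U,-A$. Your extra checks that $\partial D\neq\varnothing$ and that a connected set meeting $D$ but missing $\partial D$ lies in $D$ fill in small steps the paper leaves implicit.
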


\begin{proof}
    Since $D$ is bounded and nonempty, we know $\partial D$ is compact and nonempty. Let $M = \max_{\partial D} (U-A)$, and suppose otherwise that $\sup_D (U-A) > M$. Then the maximum of $U-A$ over the compact set $\overline{D}$, the closure of $D$, is realized at some $z_0\in D$, and moreover, $(U-A)(z_0)>M$.

    Choose $c$ with $M<c<(U-A)(z_0)$, and consider the component $Z$ of $\{ U > A+c\}$ that contains $z_0$. Then $z_0\in Z\cap D$ and $Z\cap \partial D=\varnothing$. Thus $Z\subset D$, so $Z$ is bounded, which contradicts $\mathrm{(P)}$ for the affine function $A+c$.

    Applied to $-U$ and $-A$, the same argument gives the second inequality.
\end{proof}

The second lemma provides a non-horizontal plane that splits $\{U>A\}$ or $\{U<A\}$ into at least two components.

%\newpage

\begin{lemma}\label{lem:disconnected}
    Suppose $U$ has sublinear growth and satisfies $\mathrm{(P)}$. If $U$ is not constant, then there is an affine function $A$ with non-zero gradient such that $\{U>A\}$ is disconnected.
\end{lemma}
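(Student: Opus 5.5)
Since $U$ is not constant, pick two points $z_1,z_2$ with $U(z_1)\neq U(z_2)$. The idea is to use sublinear growth to find a tilted plane lying above $U$ on a large circle but below $U$ at two separated points. Then (P) forces the components of $\{U>A\}$ through those points to escape, and the plane must cut them apart. I would proceed as follows.

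\emph{Step 1: set up coordinates.} After a rigid motion, and possibly replacing $U$ by $U$ plus a constant, arrange that $U$ is non-constant along some line. Since $U$ is continuous and non-constant on $\R^2$, it is non-constant on some line, so assume it is non-constant on the $x$-axis. Sublinear growth gives $\sup_{|z|\le R}|U(z)|=o(R)$.

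\emph{Step 2: choose a steep affine function.} Take $A(z)=\lambda\,(b\cdot z)+k$ with $b$ a unit vector and $\lambda>0$ small. The set $\{U>A\}$ then contains points on both sides of a strip where $A$ dominates. The cleanest choice I would try first uses a point $z_0$ with $U(z_0)>\min$ of $U$ on a nearby segment. Concretely, by non-constancy there are points $p_-,p_0,p_+$ on a line $\ell$, with $p_0$ between the other two, such that $U(p_0)<\max(U(p_-),U(p_+))$. Refine this so that $U(p_0)<\min(U(p_-),U(p_+))-\delta$ after adding a small linear tilt along $\ell$. This is possible because a continuous non-affine function on a line has a point where it lies strictly below a chord. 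If $U|_\ell$ is affine for every line, then $U$ is affine, and sublinear growth makes it constant. Let $A_1$ be affine with $A_1=U$ at $p_-$ and $p_+$. Then $U(p_0)<A_1(p_0)$, so $\{U>A_1-\epsilon\}$ contains $p_\pm$ but not $p_0$.

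\emph{Step 3: separate the components.} Add to $A_1$ a term $\mu\,(n\cdot(z-p_0))^2$-like barrier. Since this must stay affine, instead use the half-plane geometry. Consider the affine function $A=A_1-\epsilon+\lambda\, n\cdot(z-p_0)$ with $n\perp\ell$. I would then argue, using Lemma~\ref{lem:comparison} on large discs and sublinear growth, that the components of $\{U>A\}$ containing $p_-$ and $p_+$ are distinct.

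The main obstacle is that the two components could connect around through the far region. The approach I would try is to exploit sublinear growth: for $|z|$ large, $A-U$ has the sign of the non-zero gradient part. So, outside a large disc, $\{U>A\}$ lies in a half-plane $H$. If one of the two components, say the one through $p_+$, contained a path to $p_-$, this path together with the segment $[p_-,p_+]$ would bound a region. I would then show that the region is contained in a bounded domain $D$ on which $U-A$ exceeds its boundary maximum, or falls below its boundary minimum, near $p_0$. That contradicts Lemma~\ref{lem:comparison}.

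If this direct argument fails, the fallback is to choose the gradient direction of $A$ along $\ell$ itself. Then $\{U>A\}$ near infinity lies in one half-plane, and the unbounded components through $p_\pm$ must exit on the same side. A Jordan-curve argument using the point $p_0\in\{U<A\}$, whose component is also unbounded by (P), then shows that this component of $\{U<A\}$ separates them.
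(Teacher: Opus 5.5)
Your setup matches the paper's. You use three collinear points $p_-,p_0,p_+$ with $U(p_0)$ strictly below the chord; the paper uses $B=(-R_0,0)$, $P$, $Q$, with $B$ obtained from sublinear growth. You also use the pencil $A_\lambda=A_1-\epsilon+\lambda\,n\cdot(z-p_0)$, which agrees with $A_1-\epsilon$ on $\ell$, so $p_\pm\in\{U>A_\lambda\}$ and $p_0\in\{U<A_\lambda\}$ for every $\lambda$; this is the paper's $A_t=\alpha x+ty+c$.

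The gap is Step~3. You try to separate the components through $p_-$ and $p_+$ for a single, unspecified $\lambda$, and no fixed-$\lambda$ argument can work. For $|\lambda|$ large they are \emph{always} in the same component: a thin path running beside $[p_-,p_+]$ on the side where $\lambda\,n\cdot(z-p_0)<0$ lies in $\{U>A_\lambda\}$. Moreover, the good $\lambda$ can be unique. Take $U(x,y)=\sin y$, which has sublinear growth and satisfies $\mathrm{(P)}$, and $\ell=\{y=x\}$. For every $\lambda$ with $\partial_xA_\lambda\neq0$, the set $\{U>A_\lambda\}$ is the region on one side of a graph $x=g(y)$, hence connected. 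This also kills your fallback, since $\nabla A\parallel\ell$ gives $\partial_xA\neq0$.

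Your maximum-principle step therefore has nothing to contradict. The point $p_0$ lies on the boundary of the region enclosed by a connecting path $\gamma$ and $[p_-,p_+]$, and the unbounded component of $\{U<A\}$ through $p_0$ can simply leave across the segment to the other side of $\ell$. For the same reason the Jordan-curve step fails: an unbounded connected set issuing from $p_0$ need not separate $p_-$ from $p_+$.

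The missing idea is to let $\lambda$ vary and argue by contradiction, as the paper does.
\begin{enumerate}
\item Suppose $p_\pm$ lie in one component of $\{U>A_\lambda\}$ for every $\lambda$. Choose a connecting path $\gamma_\lambda$, close it with a fixed path $\beta$ from $p_+$ to $p_-$ avoiding $p_0$, and set $k(\lambda)=\wind(\gamma_\lambda*\beta,p_0)$.
\item This is where $\mathrm{(P)}$ enters. The component of $\{U<A_\lambda\}$ containing $p_0$ is unbounded and disjoint from $\{U>A_\lambda\}$. Hence every closed curve in $\{U>A_\lambda\}$ has winding number $0$ about $p_0$, and $k(\lambda)$ is independent of the choice of $\gamma_\lambda$.
\item A compact path in the open set $\{U>A_\lambda\}$ stays in $\{U>A_{\lambda'}\}$ for $\lambda'$ near $\lambda$. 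So $k$ is locally constant, hence constant on $\R$.
\item The two thin paths, used for $\lambda\to+\infty$ and $\lambda\to-\infty$, together bound a rectangle containing $p_0$. Their values of $k$ therefore differ by $\pm1$, a contradiction.
\end{enumerate}

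Two smaller repairs are also needed. First, make $A_1$ have nonzero slope along $\ell$, for instance by lowering the two endpoint values by slightly different amounts (the paper's $\alpha>0$ does this); otherwise the $\lambda$ produced may make $A_\lambda$ constant. Second, your claim that a continuous non-affine function on a line lies strictly below some chord is false as stated (take $-x^2$). It holds here only because sublinear growth rules out non-constant concave functions on $\ell$.
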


\begin{proof}
    Since $U$ is not constant, after applying a rigid motion of the plane, we can choose point $P=(0,0)$ and $Q=(d,0)$ with some $d>0$ such that $U(P)<U(Q)$. Choose $\alpha>0$ and $c$ such that $U(P)<c$ and $U(Q)>\alpha d+c$.
    For $t\in\R$, define
    \begin{equation}
        A_t(x,y)=\alpha x+ty+c,
    \end{equation}
    and let $G_t=\{U>A_t\}$.

    Since $U$ has sublinear growth and $\alpha>0$, there exist $R_0>0$ sufficiently large such that $U(-R_0,0)>-\alpha R_0+c$. Let $B=(-R_0,0)$; then $B,Q\in G_t$ and $P\in\{U<A_t\}$ for every $t$.
    Now we prove that for some $t$, $B$ and $Q$ belong to different components of $G_t$, which then proves the statement. Suppose otherwise; then for every $t$, there is a path $\gamma_t\subset G_t$ from $B$ to $Q$.

    Note that any closed curve in $G_t$ has winding number $0$ about $P$. Indeed, by $\mathrm{(P)}$, the component $Z$ of $\{U<A_t\}$ containing $P$ is unbounded and disjoint from any curve in $G_t$. 
    % so each component of $\R^2\backslash \overline{Z}$ is simply connected. 
    Fix a reference path $\beta$ from $Q$ to $B$ avoiding $P$, and let
    \begin{equation}
        k(t)=\wind(\gamma_t* \beta,P).
    \end{equation}
    Then $k(t)$ is independent of the choice of $\gamma_t$ for each $t$. Here $*$ denotes
    concatenation.
    Since the fixed compact path $\gamma_t$ in $G_t$ remains in $G_s$ for all $s$ sufficiently close to $t$, we know that $k(t)$ is locally constant. Therefore $k$ is constant on $\R$.

    For sufficiently small positive number $\epsilon$, consider the path which is the union of following line segments
    \begin{equation}
        \gamma_-=\{(-R_0,y):-\epsilon\leq y\leq 0\}\cup\{(x,-\epsilon):-R_0\leq x\leq d\}\cup\{(d,y):-\epsilon\leq y\leq 0\}.
    \end{equation}
    Then $\gamma_-$ is a path from $B$ to $Q$.
    For all sufficiently large positive $t$, $\gamma_-$ lies in $G_t$. 
    Similarly, for all sufficiently large negative $t$, there is a path $\gamma_+$ just above the segment $[B,Q]$ by reflecting $\gamma_-$ about the $x$-axis  lies in $G_t$. 
    Since $\gamma_-$ and $\gamma_+$ together bound a rectangle containing $P$, their corresponding winding numbers are different, after the concatenation with $\beta$, which contradicts that $k(t)$ is constant.
\end{proof}

The next lemma compares the behavior of functions with sublinear growth and affine functions near infinity.

\begin{lemma}\label{lem:sublinear-property}
    Suppose $U$ is function with sublinear growth, and $A(x,y)=px+q$ with $p>0$. For every $c>0$, there exists $R(c)>0$ such that for all $z=(x,y)$ with $|z|>R(c)$, we have
    \begin{enumerate}
        \item if $x\geq c|y|$, then $U(z)<A(z)$;
        \item if $x\leq -c|y|$, then $U(z)>A(z)$.
    \end{enumerate}
\end{lemma}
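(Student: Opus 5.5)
The plan is to use only sublinear growth, $|U(z)|/|z|\to 0$ as $|z|\to\infty$, and to compare it with the linear growth of $A$ inside the two cones. Fix $c>0$. On the cone $\{x\geq c|y|\}$ we have $x^2\geq c^2y^2$, so $|z|^2=x^2+y^2\leq (1+c^{-2})x^2$. Hence
\[
  x\geq \kappa|z|,\qquad \kappa:=\frac{c}{\sqrt{1+c^2}}>0 .
\]
Similarly, on $\{x\leq -c|y|\}$ we have $-x\geq\kappa|z|$. The whole argument rests on this reduction: each cone is a sector of half-opening angle less than $\pi/2$ about the positive (respectively negative) $x$-axis, where the linear part $px$ of $A$ dominates $p\kappa|z|$.

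Next, by sublinear growth choose $R_1$ such that $|U(z)|\leq \tfrac{p\kappa}{2}|z|$ whenever $|z|>R_1$. For $z$ in the first cone this gives
\[
  A(z)-U(z)\;\geq\; p\kappa|z|+q-\tfrac{p\kappa}{2}|z|\;=\;\tfrac{p\kappa}{2}|z|+q .
\]
For $z$ in the second cone it gives
\[
  U(z)-A(z)\;\geq\; -\tfrac{p\kappa}{2}|z|-px-q\;\geq\;\tfrac{p\kappa}{2}|z|-q .
\]
Finally, set $R(c)=\max\{R_1,\,2|q|/(p\kappa)\}+1$. For $|z|>R(c)$ both right-hand sides are strictly positive, which proves (1) and (2).

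No step here is genuinely hard. The only point needing care is that $\kappa$ depends on $c$, so $R(c)$ must depend on $c$ as well, and it degenerates as $c\to 0$, i.e., as the cone approaches the half-plane. That dependence is exactly what the lemma allows. If "sublinear growth" is intended as $\limsup |U(z)|/|z|=0$ rather than a limit, the same choice of $R_1$ still works.
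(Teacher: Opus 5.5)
Your proof is correct and follows essentially the same route as the paper's. It uses the same constant $\kappa=c/\sqrt{1+c^2}$ to get $\pm x\ge\kappa|z|$ on the two cones, chooses a radius beyond which $|U(z)|\le p\kappa|z|/2$, and takes the same threshold $2|q|/(p\kappa)$ to absorb the constant term of $A$.
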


\begin{proof}
    Let $k=c/\sqrt{1+c^2}$. When $x\geq c|y|$, we have $|z|^2=x^2+y^2\leq(1+c^2)x^2/c^2$. Thus $x\geq k|z|$.
    Since $U$ has sublinear growth, there exists $R_0>0$ such that $|U(z)|\leq pk|z|/2$ for $|z|\geq R_0$. Let $R(c)=\max\{R_0,\frac{2|q|}{pk}+1\}$. Then for $|z|\geq R(c)$ and $x\geq c|y|$,
    \begin{equation}
        A(z)\geq pk|z|-|q|>\frac{pk|z|}{2} \geq U(z).
    \end{equation}
    Similarly, if $x\leq -c|y|$, we have $U(z)>A(z)$. 
\end{proof}

\begin{corollary}\label{cor:component}
    Suppose $U$ has sublinear growth, and $A(z)=px+q$ with $p>0$. If $\{U>A\}$ is disconnected, then it has a component $\Omega$ such that
    \begin{equation}\label{eq:Omega}
        \Omega\subset \{z:|x|<c|y|\} \cup B(0,R(c)),
    \end{equation}
    for every $c>0$.
\end{corollary}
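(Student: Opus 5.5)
Since $\{U>A\}$ is disconnected, it has at least two components. The plan is to use Lemma~\ref{lem:sublinear-property} to pin down where every component must go near infinity, and then pick a component that avoids the left sector. Fix $c>0$ and let $R=R(c)$ be as in Lemma~\ref{lem:sublinear-property}. Define the sectors
\[
S_+=\{x\ge c|y|\},\qquad S_-=\{x\le -c|y|\}.
\]
Outside $B(0,R)$, the lemma says $S_+$ lies in $\{U<A\}$ and $S_-$ lies in $\{U>A\}$. Hence every component of $\{U>A\}$ lies in
\[
S_-\cup\{|x|<c|y|\}\cup B(0,R).
\]
The set $S_-\setminus B(0,R)$ is connected, being a closed sector minus a disk. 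So exactly one component of $\{U>A\}$, call it $\Omega_-(c)$, contains $S_-\setminus B(0,R)$. Any other component $\Omega\neq\Omega_-(c)$ is disjoint from $S_-\setminus B(0,R)$, and therefore
\[
\Omega\subset\{|x|<c|y|\}\cup B(0,R(c)).
\]

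The remaining issue is that the corollary asks for a single component $\Omega$ that works for every $c>0$ simultaneously. The point to check is that $\Omega_-(c)$ does not depend on $c$.

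For $c'<c$ the sector $S_-(c)$ is contained in $S_-(c')$. Take $R'' =\max(R(c),R(c'))$. Then the connected set $S_-(c)\setminus B(0,R'')$ lies in both $S_-(c)\setminus B(0,R(c))$ and $S_-(c')\setminus B(0,R(c'))$. It is nonempty and contained in $\{U>A\}$, so $\Omega_-(c)$ and $\Omega_-(c')$ share points and must coincide. Thus there is a single component $\Omega_-$ containing the far part of the negative $x$-axis.

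Since $\{U>A\}$ is disconnected, we can choose a component $\Omega\neq\Omega_-$. This $\Omega$ then satisfies \eqref{eq:Omega} for every $c>0$.

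The only step requiring care is this independence of $\Omega_-$ from $c$, which the nesting argument handles. Everything else is a direct application of Lemma~\ref{lem:sublinear-property}.
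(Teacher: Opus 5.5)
Your proof is correct and follows the paper's argument. The paper likewise uses Lemma~\ref{lem:sublinear-property} to put the connected truncated left sectors $K(c)=\{x\le -c|y|,\ |z|\ge R(c)\}$ inside a single component $\Omega_0$ of $\{U>A\}$ (via the shared points $(-x,0)$ for large $x$, where you use nesting of sectors), then takes any other component $\Omega\neq\Omega_0$, which avoids both $K(c)$ and the right sectors where $U<A$.
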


\begin{proof}
    For $c>0$, let $K(c)=\{z:x\leq -c|y|, |z|\geq R(c)\}$, which is connected. By Lemma~\ref{lem:sublinear-property}, $K(c)\subset \{U>A\}$. Since any two $K(c_1)$ and $K(c_2)$ contain the point $(-x,0)$ for sufficiently large $x$, we know that all such $K(c)$ lie in the same component $\Omega_0$ of $\{U>A\}$. Since $\{U>A\}$ is disconnected, it has a component $\Omega\neq\Omega_0$. Thus $\Omega\cap K(c)=\varnothing$.

    Similarly, let $D(c) = \{z:x\geq c|y|, |z|\geq R(c)\}$, by Lemma~\ref{lem:sublinear-property} again, $U(z)<A(z)$ if $z\in D(c)$, so $z\notin\Omega$. $\Omega\cap D(c)=\varnothing$. Hence, $\Omega$ is in the intersection of the complement of $K(c)$ and the complement of $D(c)$, which implies equation \eqref{eq:Omega}.
\end{proof}

We recall a simple result for convex functions, and the proof is left to the reader.

\begin{lemma}\label{lem:convex}
    Let $f$ be a convex function on an open interval $J=(a,b)$, and suppose that for every $\epsilon>0$, $f(x)\leq \epsilon|x|$ for all $x\in J$ with $|x|$ sufficiently large. If $b=\infty$, then f is non-increasing; if $a=-\infty$, then $f$ is non-decreasing; hence $f$ is constant if $J=\R$.
\end{lemma}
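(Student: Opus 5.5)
We take $b=\infty$; the case $a=-\infty$ follows by applying it to $x\mapsto f(-x)$ on $(-b,-a)$. When $J=\R$ the two cases together make $f$ both non-increasing and non-decreasing, hence constant.

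Suppose, for contradiction, that $b=\infty$ but $f$ is not non-increasing. Then there are points $x_1<x_2$ in $J$ with $f(x_2)>f(x_1)$. Put $s=\frac{f(x_2)-f(x_1)}{x_2-x_1}>0$.

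\emph{Slope comparison.} Convexity gives, for every $x>x_2$, that the difference quotient over $[x_1,x]$ is at least the one over $[x_1,x_2]$. Hence
\[
f(x)\ \ge\ f(x_1)+s\,(x-x_1) \qquad \text{for all } x\ge x_2 .
\]

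\emph{Contradiction with sublinear growth.} Apply the hypothesis with $\epsilon=s/2$. For all sufficiently large $x$ this gives $f(x)\le \tfrac{s}{2}|x|=\tfrac{s}{2}x$. Combining with the lower bound,
\[
f(x_1)+s\,(x-x_1)\ \le\ \tfrac{s}{2}\,x ,
\]
which rearranges to $\tfrac{s}{2}x\le s x_1-f(x_1)$. The right side is a fixed constant, so this fails for $x$ large. Therefore $f$ is non-increasing.

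The hypothesis only bounds $f$ from above, and that is exactly the direction this argument uses. No lower bound is needed.

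The main subtlety is the constant case. For $J=\R$ we need both monotonicities, and each comes from the corresponding end of the interval. If only one end is infinite, we get only the corresponding monotonicity, which matches the statement. No step is a serious obstacle; the only care needed is keeping the orientation of the reflection straight.
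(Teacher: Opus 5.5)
Your proof is correct and complete. The paper leaves this lemma to the reader, so there is no proof to compare against. Your argument is the standard one intended: an increase $f(x_2)>f(x_1)$ forces, by the three-slope inequality, growth $f(x)\ge f(x_1)+s(x-x_1)$, which contradicts the bound $f(x)\le \tfrac{s}{2}x$; the case $a=-\infty$ follows by reflecting to $x\mapsto f(-x)$, and $J=\R$ combines both monotonicities.
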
 
% \feng{maybe modify the above lemma?}

We then prove the following core lemma of this section, which claims that under the assumption (P), if $\{U>A\}$ has a thin component $\Omega$, then $\Omega$ contains a complete line, on which $U$ is constant. For simplicity, such a line is called a \emph{level line}.

\begin{lemma}\label{lem:level-line}
    Suppose $U$ satisfies $\mathrm{(P)}$, and $A(z)=px+q$ with $p,q\in\R$. Let $\Omega$ be a component of $\{U>A\}$. If
    \begin{enumerate}
        \item for every $T>0$, $\Omega\cap (\R\times[-T,T])$ is bounded;
        \item for every $\epsilon>0$, there is $T>0$ such that if $z=(x,y)\in\Omega$ and $|y|\geq T$, then $|x|\leq\epsilon|y|$ and $U(z)-A(z)\leq \epsilon|y|$.
    \end{enumerate}
    Then there exists $t_0\in\R$ such that $\{x=t_0\}\subset \Omega$ is a level line.
\end{lemma}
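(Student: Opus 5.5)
The plan is to turn the maximum principle of Lemma~\ref{lem:comparison} into a convexity statement along the $y$-direction, and then use Lemma~\ref{lem:convex} to force constancy. Write $W=U-A$. Then $W>0$ on $\Omega$ and $W=0$ on $\partial\Omega$.

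\emph{Step 1: $\Omega$ is unbounded in both vertical directions.} By (P), $\Omega$ is unbounded. By hypothesis (1), $\Omega$ must then contain points with $|y|\to\infty$. Suppose, for contradiction, that $\Omega\subset\{y\ge -T_0\}$ for some $T_0$. Take $\lambda>0$ small enough that $W(z_0)>\lambda(y_0+T_0)$ at some $z_0=(x_0,y_0)\in\Omega$. Let $Z$ be the component of $\{U>A+\lambda(y+T_0)\}$ containing $z_0$. Since $A+\lambda(y+T_0)\ge A$ on $\{y\ge -T_0\}$, we get $Z\subset\Omega$. By (2) with $\epsilon<\lambda$, $Z$ avoids $\{y\ge T\}$, so (1) makes $Z$ bounded, contradicting (P). The symmetric argument rules out $\Omega\subset\{y\le T_0\}$. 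Hence every horizontal section $\Omega_s=\Omega\cap\{y=s\}$ is nonempty, and by (1) it is bounded.

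\emph{Step 2: convexity of section maxima.} Fix any real $\mu$ and $x_0$, and set $A_\mu=A+\mu(x-x_0)$. Define
\[
m_\mu(s)=\sup_{\Omega_s}\bigl(W-\mu(x-x_0)\bigr).
\]
For $s_1<s_2$, let $D$ be the bounded open set $\Omega\cap\{s_1<y<s_2\}$, and let $\ell$ be the affine function of $y$ with $\ell(s_i)=\max(m_\mu(s_i),\sup_{\partial\Omega\cap \bar D}(-\mu(x-x_0)))$. I would apply Lemma~\ref{lem:comparison} to $U$ and the affine function $A_\mu+\ell(y)$ on $D$. The boundary of $D$ consists of pieces of $\partial\Omega$, where $W=0$, and the two sections. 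This should give that $m_\mu$ is convex, at least for $\mu=0$, where the $\partial\Omega$ term is harmless since $m_0\ge 0$. For general $\mu$ I would restrict to a range of $s$ where the sections are controlled.

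By (2), $m_0(s)\le\epsilon|s|$ for $|s|$ large. Lemma~\ref{lem:convex} on $J=\R$ then gives $m_0\equiv m^*>0$. For $\mu\neq0$, (2) also gives $|\mu(x-x_0)|\le|\mu|(\epsilon|s|+|x_0|)$ on $\Omega_s$, so the same sublinear bound holds and $m_\mu$ is constant too.

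\emph{Step 3: extracting the line.} This is the step I expect to be the main obstacle. The constancy of $m_\mu$ for all $\mu$ should pin down a single $x$-coordinate. The function $\mu\mapsto m_\mu$ is a supremum of affine functions of $\mu$. Because it is independent of $s$, it is simultaneously the Legendre-type transform of the upper profile $x\mapsto\sup\{W(x,s)\}$ over every section. Comparing the one-sided derivatives at $\mu=0$ should show that for every $s$ the maximum $m^*$ of $W$ on $\Omega_s$ is attained at an $x$-coordinate in a fixed interval $[t_-,t_+]$ independent of $s$.

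I would then argue that $t_-=t_+=:t_0$. If $t_-<t_+$, the affine function $A+m^*-\delta+\mu(x-t_0)$ with a small tilt cuts off a bounded cap of $\Omega$, contradicting (P). It then follows that $W(t_0,s)=m^*$ for all $s$. In particular $\{x=t_0\}\subset\Omega$, and $U=A+m^*=pt_0+q+m^*$ is constant on this line, so it is a level line. The delicate points are the boundary term on $\partial\Omega$ in Step 2 for $\mu\ne0$ and making this attainment/selection argument rigorous. If the Legendre approach is awkward, I would instead try a winding-number argument in the spirit of Lemma~\ref{lem:disconnected}, applied to the sets $\{W=m^*\}$ on consecutive sections.
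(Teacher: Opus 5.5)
Your Step~1 is sound. A path in $Z$ that leaves $\Omega$ would first hit $\partial\Omega\subset\{y\ge -T_0\}$, where $W=0<\lambda(y+T_0)$, so $Z\subset\Omega$. This is a neat replacement for the paper's argument that the section maximum tends to $0$ at a finite end of the projection. The case $\mu=0$ of Step~2 is exactly the paper's proof that $h=m_0$ is convex and hence constant.

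The gap is everything after that. For $\mu\neq0$ you have no proof that $m_\mu$ is convex. Applying Lemma~\ref{lem:comparison} on $D$ requires $U-A_\mu-\ell\le 0$ on $\partial\Omega\cap\overline D$. There $U-A_\mu=-\mu(x-x_0)$, which is unrelated to $m_\mu(s_1),m_\mu(s_2)$. Building the boundary supremum into $\ell(s_i)$ gives an inequality whose right-hand side depends on the band $[s_1,s_2]$ through that supremum, so it is not a three-point convexity inequality for $m_\mu$. Restricting to bands where $|\mu|\,|x-x_0|<m^*/2$ on $\overline\Omega$ gives convexity only there. For a fixed $\mu\ne0$, nothing in (1)--(2) keeps $|\mu|\,|x-x_0|$ below $m^*$ for large $|s|$, since (2) only gives $|x|\le\epsilon|s|$, while Lemma~\ref{lem:convex} needs convexity on all of $\R$. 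So the constancy of $m_\mu$ for $\mu\ne0$ is unproven, and the Legendre-transform extraction in Step~3 rests entirely on it.

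The final claim $t_-=t_+$ is moreover false. Any $U(x,y)=f(x)$ satisfies $\mathrm{(P)}$, because components of $\{U>A\}$ contain vertical rays or are vertical strips. Take $A=0$, $f=1$ on $[-1,1]$, $f=2-|x|$ for $1\le|x|\le3$, and $f=-1$ elsewhere. The component $\Omega=(-2,2)\times\R$ satisfies (1) and (2), while $W$ attains $m^*=1$ on all of $[-1,1]\times\{s\}$, and no tilted plane cuts off a bounded cap. The claim is also unnecessary: an $s$-independent largest maximizer already is the line.

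What you are missing is to work with the $\mu$-free quantity $m(t)=\max\{x:(x,t)\in\overline\Omega,\ W(x,t)=m^*\}$, the rightmost maximizer on each section. It exists because sections of $\overline\Omega$ are compact and $W=0<m^*$ on $\partial\Omega$. Then prove $m$ is convex with a tilt chosen separately for each triple $t_1<t_0<t_2$, so no global slope is ever needed.

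Suppose $m(t_0)$ exceeds the chord through $(m(t_1),t_1)$ and $(m(t_2),t_2)$ by more than $\eta>0$. Let $\lambda(y)$ be that chord shifted right by $\eta$, let $E=\{(x,y)\in\Omega: t_1<y<t_2,\ x>\lambda(y)\}$, and let $C=\max_{\overline E}(x-\lambda(y))$. By compactness and maximality of $m(t_i)$, there are $\delta_i>0$ with $W\le m^*-\delta_i$ on $\{(x,t_i)\in\overline\Omega: x\ge\lambda(t_i)\}$. Take $\nu>0$ with $\nu C<\min\{m^*,\delta_1,\delta_2\}$ and set $A'=A+m^*-\nu(x-\lambda(y))$. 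Then $U-A'\le0$ on $\partial E$:
on $\partial\Omega$ because $W=0$, on $x=\lambda(y)$ because $W\le m^*$, and on $y=t_i$ by the $\delta_i$.
But $U-A'=\nu(m(t_0)-\lambda(t_0))>0$ at $(m(t_0),t_0)\in E$, contradicting Lemma~\ref{lem:comparison}.

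Finally, (2) gives $|m(t)|\le\epsilon|t|$ for large $|t|$, so Lemma~\ref{lem:convex} makes $m\equiv t_0$ constant. Hence $\{x=t_0\}\subset\Omega$ and $U\equiv pt_0+q+m^*$ on it.
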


\begin{proof}
    For simplicity, let $W=U-A$. Then $W>0$ on $\Omega$ and $W=0$ on $\partial \Omega$. By the hypothesis (P), $\Omega$ is unbounded. Let $I$ be image of the horizontal projection of $\Omega$ onto the $y$-axis in $\R^2$. Then $I$ is an open interval, and is unbounded; otherwise, $I$ is contained in some $[-T,T]$, and Condition~(1) forces $\Omega$ to be bounded.

    For $t\in I$, let $h(t)=\sup\{W(s,t):(s,t)\in\Omega\}$. Then $h(t)$ is positive, and finite by Condition~(1). We show that $h$ is convex. For any $t_1<t_0<t_2$ in $I$, let $\lambda$ be the affine function with $\lambda(t_1)=h(t_1)$ and $\lambda(t_2)=h(t_2)$; then $\lambda$ is positive on $[t_1,t_2]$. Note that the set $D=\Omega\cap(\R\times(t_1,t_2))$ is open, nonempty, and bounded. We apply Lemma~\ref{lem:comparison} to $D$ and the affine function $A(z)+\lambda(y)$ with $z=(x,y)$. Let $z=(x,y)\in\partial D$.
    If $z\notin\Omega$, then $W(z)=0<\lambda(y)$. Otherwise, $y$ equals some $t_i$. Thus $W(x,y)\leq h(y)=\lambda(y)$. Hence, we always have $W(x,y)-\lambda(y)\leq 0$ on $\partial D$, and hence on $D$. Therefore, $W(s,t_0)\leq\lambda(t_0)$ whenever $(s,t_0)\in \Omega$. So $h(t_0)\leq \lambda(t_0)$, which implies that $h$ is convex.

    Suppose $r$ is a finite endpoint of $I$. Then $h(t)\to 0$ as $t\to r$ within $I$. Otherwise, there is a sequence of points in $\Omega$ with $y$-coordinates approaching $r$, at which $W>\sigma$ for some $\sigma>0$. By Condition~(1), they have a limit point $(x_0,e)\in\overline{\Omega}$ with $W(x_0,e)\geq\sigma$. But $e\notin I$ implies that $(x_0,e)\notin\Omega$. Thus $(x_0,e)\in\partial\Omega$, and then $W(x_0,e)=0$, contradiction.

    By Condition~(2), for every $\epsilon>0$, $h(t)\leq\epsilon|t|$ for $t\in I$ with $|t|$ sufficiently large. If $I=(r,\infty)$ with $r$ finite, by Lemma~\ref{lem:convex}, $h$ is non-increasing. Since $h(t)\to 0$ as $t\to r^+$, we then have $h(t)\leq 0$, which contradicts that $h$ is positive. Similarly, $I$ cannot have the form $(-\infty,r)$ for finite $r$. Therefore, $I=\R$. By Lemma~\ref{lem:convex}, $h(t)=h_0$ is constant. 
    % \feng{By Lemma A.5, then $h_0=0$. Something does not seem to be right here.  In the simplifed version of the proof I emailed, it define h(t) to be zero if $t$ is not in $I$ and h is defined on R right away. It proves h is convex on R.} 
    In particular, $W\leq h_0$ on $\overline{\Omega}$.

    Now we let
    \begin{equation}
        M(t)=\{s:(s,t)\in\overline{\Omega}, W(s,t)=h_0\}.
    \end{equation}
    By the previous discussion, on the intersection of $\Omega$ with each horizontal line of height $t$, the maximal of $W(\cdot,t)$ is $h_0$, and it is not realized on $\partial\Omega$ since $W|_{\partial\Omega}=0$. Thus $M(t)$ is nonempty, closed, and bounded. Let $m(t)$ be the largest element in $M(t)$. 
    
    We show that $m(t)$ is convex. Let $t_1<t_0<t_2$, and suppose that $m(t_0)$ exceeds the value at $t_0$ of the chord joining $(m(t_1),t_1)$ and $(m(t_2),t_2)$ by more than some $\eta>0$. Let $\lambda = \lambda(t)$ be the affine function such that $\lambda(t_1)=m(t_1)+\eta$ and $\lambda(t_2)=m(t_2)+\eta$; then we have $\lambda(t_0)<m(t_0)$. We consider the following set
    \begin{equation}
        E=\{(x,y)\in\Omega:t_1<y<t_2,x>\lambda(y)\},
    \end{equation}
    which is bounded and open, and contains $z_0=(m(t_0),t_0)$. Let $C=\max_{\overline{E}}(x-\lambda(y))$.

    Let $F_i=\{s:(s,t_i)\in\overline{\Omega},s\geq \lambda(t_i)=m(t_i)+\eta\}$, for $i=1,2$, which are compact. For $s\in F_i$, we have $s> m(t_i)$, so $W(s,t_i)\neq h_0$, and then $W(s,t_i)< h_0$. When $F_i\neq\varnothing$, there exists $\delta_i>0$ such that for $s\in F_i$, we have $W(s,t_i)\leq h_0-\delta_i$; otherwise, choose any positive $\delta_i$. We then choose $\nu>0$ such that $\nu C<\min\{h_0,\delta_1,\delta_2\}$. Let $A'(z)= A(z)+h_0-\nu(x-\lambda(y))$ be an affine function, and we apply Lemma~\ref{lem:comparison} to $E$ and $A'$, we have
    \begin{equation}
        U-A'=U-A-h_0+\nu(x-\lambda(y))=W-h_0+\nu(x-\lambda(y)).
    \end{equation}
    A point $z=(x,y)\in\partial E$ lies in $\overline{\Omega}$ with $t_1\leq y\leq t_2$ and $x\geq\lambda(y)$. Moreover, at least one of the following holds:
    \begin{itemize}
        \item $z\notin\Omega$: then $z\in\partial\Omega$, and hence $W(z)=0$ and $U(z)-A'(z)\leq -h_0+\nu C<0$;
        \item $x=\lambda(y)$: then $U(z)-A'(z) = W(z) - h_0\leq 0$;
        \item $y=t_i$: then $x\in F_i$, and we have $U(z)-A'(z)\leq -\delta_i+\nu C\leq 0$.
    \end{itemize}
    Therefore we always have $U-A'\leq 0$ on $\partial E$. But $U(z_0)-A'(z_0) = \nu (m(t_0)-\lambda(t_0)) > 0$, which contradicts Lemma~\ref{lem:comparison}. Hence $m(t)$ is a convex function.

    Finally, since $(m(t),t)\in\Omega$, by Condition~(2), for every $\epsilon>0$, $m(t)\leq\epsilon|t|$ when $|t|$ is sufficiently large. By Lemma~\ref{lem:convex}, $m(t)=m_0$ is constant. Thus $m_0\in M(t)$ for all $t$, which implies $(m_0,t)\in\Omega$ for all $t$. Moreover, $U(m_0,t)=pm_0+q+h_0$ is constant with respect to $t$.
\end{proof}

We are now ready to prove Theorem~\ref{thm:AV}.

\begin{proof}[Proof of Theorem~\ref{thm:AV}]
    If $U$ is constant, then we are done. Otherwise by Lemma~\ref{lem:disconnected}, there is an affine function $A$ such that $\{U>A\}$ is disconnected. After composing with a rotation of $\R^2$, we may assume that $A(z)=px+q$ with $p>0$ for $z=(x,y)\in\R^2$.

    Let $\Omega$ be the component of $\{U>A\}$ given by Corollary~\ref{cor:component}. Choosing $c=1$, and we have
    \begin{equation}
        \Omega\cap (\R \times [-T,T])\subset B(0,R(1))\cup([-T,T]\times[-T,T]),
    \end{equation}
    which is bounded, so Condition~(1) of Lemma~\ref{lem:level-line} holds. For Condition~(2), let $0<\epsilon\leq 1$, and set $c=\min\{\epsilon,\epsilon/(4p)\}$. Since $U$ has sublinear growth, there exists $R_1>0$ such that $|U(z)|\leq \epsilon|z|/4$ for $|z|\geq R_1$.
    When $z=(x,y)\in\Omega$ and $|y|\geq \max\{R(c),R_1,4|q|/\epsilon\}$, then $|z|\geq R(c)$. By Corollary~\ref{cor:component} and $z\in\Omega$, $|x|< c|y|\leq \epsilon|y|$. Thus $|z|\leq 2|y|$, and therefore,
    \begin{equation}
        U(z)-A(z)\leq |U(z)| + p|x| + |q| \leq \frac{\epsilon|y|}{2} + \frac{\epsilon|y|}{4} + \frac{\epsilon|y|}{4} = \epsilon|y|.
    \end{equation}
    By Lemma~\ref{lem:level-line}, $\Omega$ contains a vertical level line.

    Let
    \begin{equation}
        S=\{t\in\R: U(t,y)=U(t,0), \forall y\in\R\}
    \end{equation}
    denote the locations of all vertical level lines of $U$.
    Then $S$ is nonempty, and closed since $U$ is continuous. We show that $S=\R$, which then proves the theorem.
    
    Suppose otherwise that $S\neq\R$. Then the nonempty open set $\R\backslash S$ contains a component $(a,b)$. Note that at least one endpoint of $(a,b)$ is finite, and the finite endpoints of $(a,b)$ is contained in $S$. By applying a rotation of $\pi$ of the plane, we may assume that $a$ is finite. Let $\alpha$ be the value of $U$ on $\{x=a\}$ and $H=(a,b)\times\R$. 
    
    We will find an affine function $A'(z)=p'x+q'$ and a component $\Omega'\subset H$ of ${U>A'}$, for $U$ or $-U$, that satisfies Conditions~(1) and (2) of Lemma~\ref{lem:level-line}. Then it provides a vertical level line of $U$ in $H$, which contradicts $S\cap (a,b)=\varnothing$.

    \textbf{Case 1: $b<\infty$}. Let $\beta$ be the value of $U$ on $\{x=b\}$. Let $A'(z)=p'x+q'$ be the affine function such that $A'(a,0)=\alpha$ and $A'(b,0)=\beta$. If $U=A'$ on $H$, then every vertical line in $H$ is a level line, which contradicts $S\cap (a,b)=\varnothing$. So $U(z_0)\neq A'(z_0)$ for some $z_0\in H$. By replacing $U$ with $-U$ if necessary, we may assume $U(z_0)>A'(z_0)$. Let $\Omega'$ be the component of $\{U>A'\}$ that contains $z_0\in H$. Since $U=A'$ on $\partial H$, we know that $\Omega'\subset H$.

    Since $H$ is a strip, Condition~(1) of Lemma~\ref{lem:level-line} holds. For Condition~(2), note that for any $z=(x,y)\in \Omega'$, $|x|\leq c_0=\max\{|a|,|b|\}$ and $|A'(z)|\leq C_0=\max\{|\alpha|,|\beta|\}$. Let $0<\epsilon<1$ and $R_1$ be as above; then for $z\in\Omega'$ with $|y|\geq\max\{R_1,c_0/\epsilon,2C_0/\epsilon\}$, we have $|x|\leq\epsilon|y|$, $|z|\leq 2|y|$, and 
    \begin{equation}
        U(z)-A'(z)\leq \frac{\epsilon|y|}{2} + C_0 \leq \epsilon|y|.
    \end{equation}
    Hence Condition~(2) also holds.

    \textbf{Case 2: $b=\infty$}. Now $H=\{x>a\}$. If $U(z)=\alpha$ on $H$, then every vertical line in $H$ is a level line, which is impossible; thus $U(z_0)\neq \alpha$ for some $z_0=(x_0,y_0)\in H$. By the same reason, we may assume $U(z_0)>\alpha$. Choose $p'>0$ such that $p'(x_0-a) + \alpha < U(z_0)$, and we define $A'(z)=p'(x-a)+\alpha$. Then $A'(z_0)<U(z_0)$. Let $\Omega'$ be the component of $\{U>A'\}$ that contains $z_0$. Since $U=A'$ on $\partial H$ and $z_0\in H$, we have $\Omega'\subset H$.

    Choose $c=1$ and apply Lemma~\ref{lem:sublinear-property} for $A'$. Then there exists $R'(1)>0$ such that $\Omega'\subset\{a<x<|y|\}\cup B(0,R'(1))$, so Condition~(1) holds. For Condition~(2), let $0<\epsilon\leq 1$ and $c=\min\{\epsilon,\epsilon/(4p')\}$. Then for every $z\in\Omega'$ with
    \begin{equation}
        |y|\geq\max\left\{R'(c), R_1, \frac{|a|}{\epsilon},\frac{4(|\alpha|+2p'|a|)}{\epsilon}\right\}, 
    \end{equation}
    we have $|x|\leq\max\{|a|,c|y|\}\leq\epsilon|y|$, and $|x|\leq c|y|+|a|$. Thus,
    \begin{equation}
        U(z)-A'(z)\leq |U(z)| + p'|x| + p'|a| + |\alpha|\leq \frac{\epsilon|y|}{2} + \frac{\epsilon|y|}{4} + 2p'|a| + |\alpha| \leq\epsilon|y|. 
    \end{equation}

    In both cases, Lemma~\ref{lem:level-line} yields a vertical level line of $U$ in $\Omega'\subset H$, which is a contradiction. Hence, $S=\R$.
\end{proof}

\bibliographystyle{alpha}
\bibliography{bibliography-mathscinet}
\end{document}

%% file: fig_dual_weights.tikz
% Three panels: reciprocal duality, cotangent weights, and Dubejko weights.
% Requires \usepackage{amsmath,tikz} and \usetikzlibrary{angles,quotes}.
% L and R refer to the left and right of the directed edge q -> p.
% J(x,y)=(-y,x); all three illustrated weights are positive.
% Dubejko formula: https://arxiv.org/pdf/1910.12796, Definition 1.2.
\begin{tikzpicture}[
  color=black,font=\footnotesize,line cap=round,line join=round,
  primal/.style={line width=0.65pt},
  dual/.style={line width=1pt},
  circle line/.style={draw=black,line width=0.5pt},
  point/.style={circle,fill=black,inner sep=1.15pt},
  center point/.style={circle,fill=black,inner sep=1.45pt},
  panel title/.style={font=\small},
  formula/.style={font=\small,align=center,inner sep=0pt}
]
  % Fixed panel positions; coordinates are in centimetres.
  \path[use as bounding box] (0,-1.15) rectangle (16.15,4.8);
  \node[panel title] at (2.55,4.6) {(a) Dual equation};
  \node[panel title] at (8.05,4.6) {(b) Cotangent weight};
  \node[panel title] at (13.55,4.6) {(c) Dubejko weight};

  % (a) Local boundaries of two possibly unbounded polygonal faces sharing pq.
  \begin{scope}[shift={(2.55,2.1)},scale=0.88]
    \coordinate (AP) at (-0.35,0.75);
    \coordinate (AQ) at (0.35,-0.75);
    % Reciprocal-dual vertices, not polygon centers. These positions satisfy
    % g_L-g_R=(53/30)J(p-q); containment in the faces is only illustrative.
    \coordinate (AL) at (-1.4,{-1.4*7/15});
    \coordinate (AG) at (1.25,{1.25*7/15});
    \coordinate (AM) at (0,0);
    % Dashed ends indicate omitted boundary portions, not closing edges.
    % This local view imposes no boundedness assumption on either face.
    \draw[primal] (AP)--(-1.35,0.75)--(-2.05,0.15)--(-2.15,{-5/12});
    \draw[primal,densely dashed] (-2.15,{-5/12})--(-2.2,-0.7);
    \draw[primal] (AP)--(0.6,2.05)--(1.7,{2.05+1.1*0.45/1.45});
    \draw[primal,densely dashed]
      (1.7,{2.05+1.1*0.45/1.45})--(2.05,2.5);
    \draw[primal] (AQ)--(0.2,-1.55)--(-0.15,{-1.55-0.35*2/3});
    \draw[primal,densely dashed]
      (-0.15,{-1.55-0.35*2/3})--(-0.4,-1.95);
    \draw[primal] (AQ)--(1.3,-1.05)--(2.1,-0.75)--(2.34,-0.3);
    \draw[primal,densely dashed] (2.34,-0.3)--(2.5,0);
    \draw[primal] (AQ)--(AP);
    \draw[dual] (AG)--(AL);
    % Orient the shared edge from q to p, as in the dual equation.
    \draw[primal,->,>=stealth] (AQ)--(-0.175,0.375);
    \pic[draw,line width=0.4pt,angle radius=2mm]
      {right angle=AQ--AM--AG};
    \node[point,label={above left:{$p$}}] at (AP) {};
    \node[point,label={[label distance=2pt]below right:{$q$}}] at (AQ) {};
    \node[center point,label={below:{$g_L$}}] at (AL) {};
    \node[center point,label={below right:{$g_R$}}] at (AG) {};
    \node at (-1.2,0.1) {$L$};
    \node at (1.2,1.5) {$R$};
  \end{scope}
  \node[formula] at (2.55,-0.25)
    {$g_L-g_R=c_{pq}J(p-q)$};
  \node[formula] at (2.55,-0.88)
    {$\displaystyle c_{pq}=\frac{|g_L-g_R|}{|p-q|}$};

  % (b) The marked angles at the circumcenters equal the opposite angles
  % of the incident triangles in this illustrated configuration.
  \begin{scope}[shift={(7.68,1.98)},scale=0.67]
    \pgfmathsetmacro{\leftRadius}{sqrt(5/2)}
    \pgfmathsetmacro{\rightRadius}{sqrt(89/20)}
    \coordinate (BP) at (-0.5,1);
    \coordinate (BQ) at (0.5,-1);
    \coordinate (BL) at (-1,-0.5);
    \coordinate (BG) at (1.6,0.8);
    \coordinate (BM) at (0,0);
    \coordinate (BS) at ({-1-3*\leftRadius/5},
                            {-0.5-4*\leftRadius/5});
    \coordinate (BR) at ({1.6+40*\rightRadius/41},
                            {0.8+9*\rightRadius/41});
    \draw[circle line] (BL) circle[radius=\leftRadius];
    \draw[circle line] (BG) circle[radius=\rightRadius];
    % Mask only the circumcircle behind this label; redraw the straight
    % edges afterwards so that their geometry stays fully visible.
    \node[fill=white,font=\fontsize{7}{8}\selectfont,inner sep=0.4pt]
      at (-0.46,0.13) {\phantom{$\beta_{pq}$}};
    \draw[primal] (BP)--(BS)--(BQ)--(BR)--cycle;
    \draw[primal] (BP)--(BQ);
    \draw[dual] (BL)--(BG);
    \draw[draw=black,line width=0.55pt] (BP)--(BG);
    \draw[draw=black,line width=0.55pt] (BP)--(BL);
    \pic[draw,line width=0.4pt,angle radius=1.8mm]
      {right angle=BP--BM--BG};
    \pic[draw=black,text=black,
         line width=0.4pt,angle radius=2.3mm]
      {angle=BG--BL--BP};
    \node[text=black,font=\fontsize{7}{8}\selectfont,inner sep=0.4pt]
      at (-0.46,0.13) {$\beta_{pq}$};
    \pic[draw=black,text=black,
         line width=0.4pt,angle radius=4mm,
         angle eccentricity=1.65,"$\alpha_{pq}$"]
      {angle=BP--BG--BL};
    % Repeat the equal angles at the opposite vertices on the circumcircles.
    \pic[draw=black,text=black,line width=0.4pt,angle radius=4.2mm,
         angle eccentricity=1.6,"$\beta_{pq}$"]
      {angle=BQ--BS--BP};
    \pic[draw=black,text=black,line width=0.4pt,angle radius=4.4mm,
         angle eccentricity=1.6,"$\alpha_{pq}$"]
      {angle=BP--BR--BQ};
    \node[point,label={above left:{$p$}}] at (BP) {};
    \node[point,label={below right:{$q$}}] at (BQ) {};
    \node[point] at (BS) {};
    \node[point] at (BR) {};
    \node[center point,label={below right:{$g_L$}}] at (BL) {};
    \node[center point,label={[label distance=2pt]below:{$g_R$}}] at (BG) {};
  \end{scope}
  \node[formula] at (8.05,-0.25) {Circumcenters $g_L,g_R$};
  \node[formula] at (8.05,-0.88)
    {$\displaystyle w_{pq}=\frac12
      \bigl(\cot\alpha_{pq}+\cot\beta_{pq}\bigr)$};

  % (c) Four tangent vertex circles and two tangent face incircles.
  % Exact side lengths are sums of the endpoint circle radii.
  \begin{scope}[shift={(13.55,3.1)},scale=0.6]
    \pgfmathsetmacro{\radiusP}{1.15}
    \pgfmathsetmacro{\radiusQ}{0.95}
    \pgfmathsetmacro{\radiusS}{1.9}
    \pgfmathsetmacro{\radiusR}{1.45}
    \pgfmathsetmacro{\edgeLength}{\radiusP+\radiusQ}
    \pgfmathsetmacro{\lowerX}{((\radiusP+\radiusS)^2
      +\edgeLength^2-(\radiusQ+\radiusS)^2)/(2*\edgeLength)}
    \pgfmathsetmacro{\lowerY}{sqrt((\radiusP+\radiusS)^2-\lowerX^2)}
    \pgfmathsetmacro{\upperX}{((\radiusP+\radiusR)^2
      +\edgeLength^2-(\radiusQ+\radiusR)^2)/(2*\edgeLength)}
    \pgfmathsetmacro{\upperY}{sqrt((\radiusP+\radiusR)^2-\upperX^2)}
    \pgfmathsetmacro{\inradiusL}{sqrt(\radiusP*\radiusQ*\radiusS
      /(\radiusP+\radiusQ+\radiusS))}
    \pgfmathsetmacro{\inradiusR}{sqrt(\radiusP*\radiusQ*\radiusR
      /(\radiusP+\radiusQ+\radiusR))}
    \begin{scope}[rotate=-50]
      \coordinate (CP) at (0,0);
      \coordinate (CQ) at (\edgeLength,0);
      \coordinate (CS) at (\lowerX,-\lowerY);
      \coordinate (CR) at (\upperX,\upperY);
      \coordinate (CL) at (\radiusP,-\inradiusL);
      \coordinate (CG) at (\radiusP,\inradiusR);
      \coordinate (CT) at (\radiusP,0);
    \end{scope}
    \draw[circle line] (CP) circle[radius=\radiusP];
    \draw[circle line] (CQ) circle[radius=\radiusQ];
    \draw[circle line] (CS) circle[radius=\radiusS];
    \draw[circle line] (CR) circle[radius=\radiusR];
    \draw[line width=0.55pt] (CL) circle[radius=\inradiusL];
    \draw[line width=0.55pt] (CG) circle[radius=\inradiusR];
    \draw[primal] (CP)--(CS)--(CQ)--(CR)--cycle;
    \draw[primal] (CP)--(CQ);
    \draw[dual] (CL)--(CG);
    \pic[draw,line width=0.4pt,angle radius=1.4mm]
      {right angle=CP--CT--CG};
    \node[point,label={above left:{$p$}}] at (CP) {};
    \node[point,label={below right:{$q$}}] at (CQ) {};
    \node[point] at (CS) {};
    \node[point] at (CR) {};
    \node[center point] at (CL) {};
    \node[center point] at (CG) {};
    \node[fill=white,inner sep=0.7pt,below=2pt] at (CL) {$g_L$};
    \node[fill=white,inner sep=0.7pt,above=2pt] at (CG) {$g_R$};
  \end{scope}
  \node[formula] at (13.55,-0.25) {Incenters $g_L,g_R$};
  \node[formula] at (13.55,-0.88)
    {$\displaystyle c_{pq}=\frac{\rho_L+\rho_R}{r_p+r_q}$};
\end{tikzpicture}

%% file: fig_transient.tex
% Figure for Section 5: the transient Delaunay triangulation (nested rectangles).
% Drawn to scale from the exact coordinates; needs only TikZ and the angles
% library, which the preamble already loads.
\begin{figure}[h!]
\centering
\tikzset{
  dt edge/.style={gray!75!black, line width=0.5pt},
  dt diag/.style={gray!55, dashed, line width=0.45pt},
  dt ray/.style={blue!65!black, line width=1.3pt},
  dt vertex/.style={circle, fill=black, inner sep=0pt, minimum size=3pt},
}
\begin{tikzpicture}[scale=0.62, font=\small, baseline=(panel.base)]
  % ring n has corners (+-4^n, +-2^n)
  \foreach \n/\x/\y in {0/1/1, 1/4/2, 2/16/4} {
    \coordinate (A\n) at (\x,\y);   \coordinate (B\n) at (\x,-\y);
    \coordinate (C\n) at (-\x,-\y); \coordinate (D\n) at (-\x,\y);
  }
  \coordinate (O) at (0,0);
  % the two triangles containing the edge A_0A_1
  \fill[blue!6] (A0) -- (A1) -- (B1) -- cycle;
  \fill[blue!6] (A0) -- (A1) -- (D1) -- cycle;
  \begin{scope}
    \clip (-5.6,-2.75) rectangle (6.1,2.75);
    % circumcircle of the cyclic trapezoid A_0B_0B_1A_1: center (3,0), radius sqrt(5)
    \draw[gray!60, densely dotted, line width=0.6pt] (3,0) circle ({sqrt(5)});
    \foreach \p/\q in {A0/B1, A0/D1, C0/D1, C0/B1} \draw[dt diag] (\p) -- (\q);
    \foreach \v in {A0,B0,C0,D0} \draw[dt edge] (O) -- (\v);
    \foreach \n in {0,1} \draw[dt edge] (A\n) -- (B\n) -- (C\n) -- (D\n) -- cycle;
    \foreach \v in {A,B,C,D} \draw[dt edge] (\v0) -- (\v1) -- (\v2);
    \draw[dt ray] (O) -- (A0) -- (A1);
  \end{scope}
  % the ray continues toward A_2 = (16,4); it leaves the frame at x = 6.1
  \draw[dt ray, ->] (A1) -- (6.1,2.35);
  % the angles opposite to A_0A_1
  \draw pic[draw, blue!65!black, angle radius=4.2mm] {angle=A1--B1--A0};
  \draw pic[draw, blue!65!black, angle radius=16mm] {angle=A0--D1--A1};
  \node[font=\scriptsize, text=blue!65!black] at (3.62,-1.05) {$\beta_0$};
  \node[font=\scriptsize, text=blue!65!black] at (-0.62,1.66) {$\delta_0$};
  \foreach \v in {O,A0,B0,C0,D0,A1,B1,C1,D1} \node[dt vertex] at (\v) {};
  \node[right=2pt] at (O) {$O$};
  \node[above] at (A0) {$A_0$};  \node[above right=-1pt] at (B0) {$B_0$};
  \node[below] at (C0) {$C_0$};  \node[left] at (D0) {$D_0$};
  \node[above] at (A1) {$A_1$};  \node[below] at (B1) {$B_1$};
  \node[below] at (C1) {$C_1$};  \node[above] at (D1) {$D_1$};
  \node (panel) at (0.25,-3.4) {(a)};
\end{tikzpicture}\hfill
\begin{tikzpicture}[scale=0.19, font=\small, baseline=(panel.base)]
  \foreach \n/\x/\y in {0/1/1, 1/4/2, 2/16/4, 3/64/8} {
    \coordinate (A\n) at (\x,\y);   \coordinate (B\n) at (\x,-\y);
    \coordinate (C\n) at (-\x,-\y); \coordinate (D\n) at (-\x,\y);
  }
  \coordinate (O) at (0,0);
  \begin{scope}
    \clip (-19.5,-5.2) rectangle (19.5,5.2);
    \foreach \n/\m in {0/1, 1/2} \foreach \p/\q in {A/B, A/D, C/D, C/B}
      \draw[dt diag] (\p\n) -- (\q\m);
    \foreach \v in {A0,B0,C0,D0} \draw[dt edge] (O) -- (\v);
    \foreach \n in {0,1,2} \draw[dt edge] (A\n) -- (B\n) -- (C\n) -- (D\n) -- cycle;
    \foreach \v in {A,B,C,D} \draw[dt edge] (\v0) -- (\v1) -- (\v2) -- (\v3);
    \draw[dt ray] (O) -- (A0) -- (A1) -- (A2);
  \end{scope}
  % the ray continues toward A_3 = (64,8); it leaves the frame at x = 19.5
  \draw[dt ray, ->] (A2) -- (19.5,4.29);
  \foreach \v in {A1,B1,C1,D1,A2,B2,C2,D2} \node[dt vertex, minimum size=2.4pt] at (\v) {};
  \node[above left=-1pt] at (A1) {$A_1$};
  \node[above] at (A2) {$A_2$};  \node[below] at (B2) {$B_2$};
  \node[below] at (C2) {$C_2$};  \node[above] at (D2) {$D_2$};
  \node (panel) at (0,-8.9) {(b)};
\end{tikzpicture}
\caption{The Delaunay triangulation $T$ of $V$; the $n$-th rectangle has corners $(\pm4^n,\pm2^n)$.  The thick path $OA_0A_1A_2\cdots$ is the ray used in the resistance estimate. (a)~Near the origin. The dotted circle through $A_0,B_0,B_1,A_1$ contains no other vertex, and the edge $A_0A_1$ has weight $\frac12(\cot\beta_0+\cot\delta_0)=\frac12(1+5)=3$, where $\beta_0=\angle A_0B_1A_1$ and $\delta_0=\angle A_0D_1A_1$. (b)~The first three rectangles.}
\label{fig:transient}
\end{figure}

%% file: mix1.tikz
% Two labeled diagrams: a mixed weighted triangle and its adjacent-face dual.
% Preamble: \usepackage{amsmath,tikz}
%           \usetikzlibrary{angles,arrows.meta}
% Usage: \input{figures/mix1.tikz}
% Geometry: p_i=(0,0), p_j=(5,0), p_k=(1.75,4),
% r_i=1.3, r_k=1.375, and (epsilon_i,epsilon_j,epsilon_k)=(1,0,-1).
% Signed site weights: (s_i,s_j,s_k)=(1.69,0,-1.890625).
% q_ijk=(2.669,1.662703125) equalizes |q-p_a|^2-s_a.
% Each c_ab is the orthogonal projection of q_ijk onto the edge p_a p_b.
% The dashed circle is only a guide to the virtual-radius magnitude.
% Panel (b) adds p_ell=(3.3,-3.7), s_ell=0.9^2, giving
% q_ijell=(2.669,-1.060081081081). Circles are omitted in panel (b)
% to make the shared edge, perpendicular dual edge, and heights clear.
% These definitions are local, so the figure does not alter document colors.
\begingroup
\definecolor{mixInk}{RGB}{47,59,71}
\definecolor{mixMuted}{RGB}{132,141,149}
\definecolor{mixReal}{RGB}{76,105,175}
\definecolor{mixVirtual}{RGB}{128,91,167}
\definecolor{mixDual}{RGB}{65,124,132}
\definecolor{mixCenter}{RGB}{187,108,54}
\definecolor{mixFace}{RGB}{243,247,247}
\begin{tikzpicture}[
  font=\footnotesize,text=mixInk,line cap=round,line join=round,
  primal/.style={draw=mixInk,line width=0.8pt},
  dual/.style={draw=mixDual,line width=1.05pt},
  vertex/.style={circle,fill=mixInk,draw=white,line width=0.3pt,
                 inner sep=1.55pt},
  edge center/.style={rectangle,draw=mixDual,fill=white,
                     line width=0.7pt,inner sep=1.3pt},
  right angle mark/.style={draw=mixMuted,line width=0.45pt,
                           angle radius=1.8mm},
  dimension/.style={draw=mixMuted,line width=0.6pt,
                   {Latex[length=1.5mm,width=1mm]}-{Latex[length=1.5mm,width=1mm]}},
  point label/.style={inner sep=1pt},
  panel title/.style={font=\small,text=mixInk,align=center,text width=7.5cm}
]
 % \path[use as bounding box] (0,-0.1) rectangle (16,7.2);
  % \node[panel title] at (4,6.85)
  %   {(a) Dual center of a weighted triangle};
  % \node[panel title] at (12,6.85)
  %   {(b) Dual structure across two adjacent triangles};
  % Parameters of the illustrated configuration.
  \pgfmathsetmacro{\mixLength}{5}
  \pgfmathsetmacro{\mixKx}{1.75}
  \pgfmathsetmacro{\mixKy}{4}
  \pgfmathsetmacro{\mixRi}{1.3}
  \pgfmathsetmacro{\mixRk}{1.375}
  \pgfmathsetmacro{\mixSi}{(\mixRi)^2}
  \pgfmathsetmacro{\mixSj}{0}
  \pgfmathsetmacro{\mixSk}{-(\mixRk)^2}
  \pgfmathsetmacro{\mixQx}{((\mixLength)^2+\mixSi-\mixSj)/(2*\mixLength)}
  \pgfmathsetmacro{\mixQy}{((\mixKx)^2+(\mixKy)^2+\mixSi-\mixSk
                            -2*\mixQx*\mixKx)/(2*\mixKy)}
  \pgfmathsetmacro{\mixTik}{(\mixQx*\mixKx+\mixQy*\mixKy)
                             /((\mixKx)^2+(\mixKy)^2)}
  \pgfmathsetmacro{\mixTjk}{((\mixQx-\mixLength)*(\mixKx-\mixLength)
                              +\mixQy*\mixKy)
                             /((\mixKx-\mixLength)^2+(\mixKy)^2)}
  \begin{scope}[shift={(1.55,1.7)},scale=0.79]
  \coordinate (PI) at (0,0);
  \coordinate (PJ) at (\mixLength,0);
  \coordinate (PK) at (\mixKx,\mixKy);
  \coordinate (Q) at (\mixQx,\mixQy);
  \coordinate (CIJ) at (\mixQx,0);
  \coordinate (CIK) at ({\mixTik*\mixKx},{\mixTik*\mixKy});
  \coordinate (CJK) at ({\mixLength+\mixTjk*(\mixKx-\mixLength)},
                        {\mixTjk*\mixKy});

  \fill[mixFace] (PI)--(PJ)--(PK)--cycle;
  \draw[primal] (PI)--(PJ)--(PK)--cycle;
  \draw[draw=mixReal,line width=0.85pt] (PI) circle[radius=\mixRi];
  \draw[draw=mixVirtual,line width=0.85pt,dash pattern=on 3pt off 2pt]
    (PK) circle[radius=\mixRk];
  \draw[dual] (CIK)--(Q)--(CJK);
  \draw[dual] (Q)--(CIJ);

  \pic[right angle mark] {right angle=PJ--CIJ--Q};
  \pic[right angle mark] {right angle=PK--CIK--Q};
  \pic[right angle mark] {right angle=PK--CJK--Q};
  \node[edge center] at (CIJ) {};
  \node[edge center] at (CIK) {};
  \node[edge center] at (CJK) {};
  \node[vertex] at (PI) {};
  \node[vertex] at (PJ) {};
  \node[vertex] at (PK) {};
  \node[circle,fill=mixCenter,draw=white,line width=0.5pt,inner sep=2.1pt]
    at (Q) {};

  \node[point label,text=mixReal] at (0,{-\mixRi-0.36})
    {$p_i,\ \varepsilon_i=+1$};
  \node[point label,below right=6pt] at (PJ)
    {$p_j,\ \varepsilon_j=0$};
  \node[point label,text=mixVirtual] at (\mixKx,{\mixKy+\mixRk+0.34})
    {$p_k,\ \varepsilon_k=-1$};
  \node[point label,text=mixReal] at (-1.15,1.15) {$r_i$};
  \node[point label,text=mixVirtual]
    at ({\mixKx+1.40},{\mixKy+0.83}) {$r_k$};
  \node[point label,text=mixDual,above right=3pt] at (CIJ) {$c_{ij}$};
  \node[point label,text=mixDual,above left=7pt] at (CIK) {$c_{ik}$};
  \node[point label,text=mixDual,above right=7pt] at (CJK) {$c_{jk}$};
  \node[point label,text=mixCenter,below right=5pt] at (Q) {$z_{ijk}$};

  % \draw[dimension] (0,-0.32)--(\mixQx,-0.32)
  %   node[midway,below=4pt,text=mixInk] {$d_{ij}$};
  % \draw[dimension] (\mixQx,-0.32)--(\mixLength,-0.32)
  %   node[midway,below=4pt,text=mixInk] {$d_{ji}$};
  \end{scope}

  % (b) Both face centers equalize powers at i and j, so they share
  % the x-coordinate mixQx. Their connecting segment is perpendicular
  % to p_i p_j; both displayed signed heights are positive here.
  \begin{scope}[shift={(9.8,3.25)},scale=0.72]
    \pgfmathsetmacro{\mixLx}{3.3}
    \pgfmathsetmacro{\mixLy}{-3.7}
    \pgfmathsetmacro{\mixSl}{0.9^2}
    \pgfmathsetmacro{\mixLowerY}{((\mixLx)^2+(\mixLy)^2+\mixSi-\mixSl
      -2*\mixQx*\mixLx)/(2*\mixLy)}
    \coordinate (BI) at (0,0);
    \coordinate (BJ) at (\mixLength,0);
    \coordinate (BK) at (\mixKx,\mixKy);
    \coordinate (BL) at (\mixLx,\mixLy);
    \coordinate (BQK) at (\mixQx,\mixQy);
    \coordinate (BQL) at (\mixQx,\mixLowerY);
    \coordinate (BC) at (\mixQx,0);
    \fill[mixFace] (BI)--(BK)--(BJ)--(BL)--cycle;
    \draw[primal] (BI)--(BK)--(BJ)--(BL)--cycle;
    \draw[primal,line width=0.95pt] (BI)--(BJ);
    \draw[dual,line width=1.2pt] (BQK)--(BQL);
    \pic[right angle mark] {right angle=BJ--BC--BQK};
    \foreach \v in {BI,BJ,BK,BL}
      \node[vertex] at (\v) {};
    \foreach \v in {BQK,BQL}
      \node[circle,fill=mixCenter,draw=white,line width=0.5pt,inner sep=2.1pt]
        at (\v) {};
    \node[edge center] at (BC) {};
    \node[point label,left=4pt] at (BI) {$p_i$};
    \node[point label,right=4pt] at (BJ) {$p_j$};
    \node[point label,above=4pt] at (BK) {$p_m$};
    \node[point label,below=4pt] at (BL) {$p_k$};
    \node[point label,text=mixCenter,above left=4pt] at (BQK) {$z_{ijm}$};
    \node[point label,text=mixCenter,below right=4pt] at (BQL) {$z_{ijk}$};
    \node[point label,text=mixDual,below right=4pt] at (BC) {$c_{ij}$};
    % \node[point label,text=mixDual,anchor=east]
    %   at ({\mixQx-0.18},{\mixQy/2}) {$h_{ij}^{k}$};
    % \node[point label,text=mixDual,anchor=east]
    %   at ({\mixQx-0.18},{\mixLowerY/2}) {$h_{ij}^{\ell}$};
  \end{scope}
  \node[text=mixDual] at (11.8,-0.1)
    {$z_{ijk}-z_{ijm}\perp p_i-p_j$};
\end{tikzpicture}
\endgroup